\newtheorem{theorem}{Theorem}[section]
\newtheorem{lemma}[theorem]{Lemma}
\newtheorem*{conjecture*}{Conjecture}
\newtheorem{proposition}[theorem]{Proposition}
\theoremstyle{definition}
\newtheorem{remark}[theorem]{Remark}
\theoremstyle{question}
\theoremstyle{acknowledgement}
\newtheorem*{theorem*}{Theorem}
\numberwithin{equation}{section}
\newcommand{\Er}{\mathbb{R}}
\newcommand{\calc}{{\mathcal{C}}}
\newcommand{\cale}{{\mathcal{E}}}
\newcommand{\calh}{{\mathcal{H}}}
\newcommand{\caln}{{\mathcal{N}}}
\newcommand{\be}{\mathbf{e}}
\newcommand{\bE}{\mathbf{E}}
\def\pmb#1{\setbox0=\hbox{#1}%
\kern-.02em\copy0\kern-\wd0
\kern0.01em\raise.01732em\copy0\kern-\wd0
\kern0.01em\raise.01732em\copy0\kern-\wd0
\lower.01732em\copy0\kern-\wd0 \kern0.01em\copy0\kern-\wd0
\kern0.01em\lower.01732em\box0}
\begin{document}

\title[Even Symmetry of Entire Solutions]{ Even Symmetry of Some Entire Solutions to the Allen-Cahn Equation in Two Dimensions }
%\short{Even Symmetry of Entire Solutions}

\author{Changfeng Gui}
\address{Changfeng Gui,  Department of Mathematics, U-9, University of Connecticut\\
Storrs, CT 06269, USA  and School of Mathematics and Econometrics,
Hunan University, Changsha, China} \email{gui@math.uconn.edu}
%%Put your correct addresss

\maketitle

%\date{May 2, 2007}
%\centerline{\today}

\begin{abstract}
In this paper, we prove  even symmetry  and monotonicity of  certain
solutions of Allen-Cahn equation in a half plane. We also show that
entire solutions with {\it finite Morse index} and {\it four ends}
must be evenly symmetric with respect to  two orthogonal axes. A
classification scheme of general entire solutions with {\it finite
Morse index} is also presented  using energy quantization.
\end{abstract}

\vskip0.2in {\bf Keywords:} Allen-Cahn equation,  Hamiltonian
identity,  Level Set,  Saddle solutions, Even symmetry,
Monotonicity, Morse index.

\vskip 0.3cm
 {\bf 1991 Mathematical Subject Classification.} 35J20, 35J60, 35J91
49Q05, 53A04.

\section{ Introduction}

We shall consider entire solutions of the  following Allen-Cahn
equation
\begin{equation}\label{allencahn-general}
 u_{xx}+ u_{yy}-F'(u)=0, \quad |u| \le 1, \quad (x, y) \in \Er^2,
\end{equation}
where  $F$ is a  balanced double-well potential, i.e., $F \in C^{2
,\beta}([-1, 1])$  satisfies  $ F(1)=F(-1)=0$ and
\begin{equation}\label{doublewell}
\left\{
\begin{split}
&  F'(-1)=F'(1)=0, \quad  F''(-1)>0,\,\,  F''(1)>0;  \\
&F'(t)>0, \, t\in (-1, t_0); \quad   F'(t)<0, \,  t \in (t_0, 1)
\end{split}
\right.
\end{equation}
for some $t_0 \in (0, 1)$.  Without loss of generality, we may
assume that $t_0=0$.  A typical example of balanced double well
potential is  $  F(u)=\frac{1}{4} (1-u^2)^2, \quad u \in \Er $.

It is well-known that there exists a unique transition layer
solution $g(y)$ (up to translation) to the one dimensional
Allen-Cahn equation
\begin{equation}\label{1d-ac}
\left \{
\begin{split}
&g''(s)-F'(g(s))=0, \quad s \in \Er,\\
& \lim_{s \to \infty} g(s)=1,\quad \lim_{s \to -\infty} g(s)=-1.
\end{split}
\right.
\end{equation}
 We may assume that $g(0)=0$. Indeed,  $g$ is a minimizer of the following energy functional
$$
\bE(v):=\int_{-\infty}^{\infty} [\frac{1}{2} |v'|^2 + F(v)]dx
$$
in $\calh:=\{ v \in H^1_{loc}(\Er): -1 \le v \le 1, \,\,\lim_{ s \pm
\infty} v(s)=\pm 1\}$ and
$$
\be:=\bE(g)=\int_{-1}^{1} \sqrt{ 2 F(u)} du<\infty.
$$
The solution $g$ is non-degenerate in the sense that the linearized
operator has a kernel spanned only by $g'$.

If $u$ is an evenly symmetric solution in $x$, we may regard $u$ as
a solution in the half plane $\Er^2_+:= \{ (x, y) | x \ge 0, \,\, y
\in \Er\}$,
\begin{equation}\label{2d-ac}
\left\{
\begin{split}
 &u_{xx}+u_{yy}-F'(u)=0, \quad |u| <1,  \quad (x, y)
\in \Er^2_+\\
& u_x(0, y)=0, \quad y \in \Er.
\end{split}
\right.
\end{equation}

We may  also assume  that $u$ satisfies the  monotone condition
\begin{equation}\label{axial-monotone}
u_x(x, y) >0, \quad x>0, \,\, y \in \Er.
\end{equation}

Our main theorem  states that  $u$ must be evenly symmetric  with
respect to $y$ and  monotone for $ y>0$.

\begin{theorem}\label{axial-symm}
Assume that $u(x, y)$ is even in $x$ and satisfies \eqref{2d-ac} and
\eqref{axial-monotone}. Then $u$ is even in $y$, i.e.,
\begin{equation}\label{axial-even}
u(x, y)=u(x, -y), \quad  (x, y) \in \Er^2_+
\end{equation}
after a proper translation in $y$.  Moreover,
 $u_y(x,y)<0$ for $ x>0, y>0$, and  the 0-level set of $u$ for $ x>x_0$ can be expressed as
the graph of two $C^{3, \beta} $ functions  $ y=\pm k(x)$ which is
asymptotically linear, i.e. $k(x)= \kappa x +C+o(1)$  for some
constants  $\kappa>0, C$,  as $ x $ goes to infinity.

In particular, we have
\begin{equation}\label{limit}
\lim_{x \to \infty} u(x, y)=1, \quad \forall y \in \Er.
\end{equation}
\end{theorem}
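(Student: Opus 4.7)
The plan is to combine three ingredients: an asymptotic analysis of the $0$-level set as $x\to\infty$, Gui's Hamiltonian identity applied over horizontal slices, and a moving-plane argument in the $y$-direction.

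\emph{Step 1: pointwise limit and level-set structure.} Because $u_x>0$ on $\{x>0\}$ and $|u|\le 1$, the monotone limit $u^+(y):=\lim_{x\to\infty}u(x,y)$ exists, and by elliptic regularity applied to the translates $u(\cdot+n,\cdot)$ it defines a bounded entire solution of $(u^+)''=F'(u^+)$. The bounded entire solutions of this ODE are $\pm 1$, the constant $t_0=0$, translates of $\pm g$, and periodic orbits. Since $u_x>0$ on $\{x>0\}$ is a positive solution of the linearized equation, the limit $u^+$ inherits stability and must be one of $\pm 1$ or a translate of $\pm g$. I would then rule out the remaining alternatives using the Neumann relation $u_x(0,y)=0$, the Hopf-lemma consequence $u_{xx}(0,y)>0$, and the sign structure of $F'$ on $(t_0,1)$, concluding $u^+\equiv 1$ and thus (\ref{limit}). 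Since $u(\cdot,y)$ is even and strictly increasing on $[0,\infty)$ with positive limit $1$, it has either no zero or exactly two zeros at $\pm x_0(y)$; combined with $u_x\ne 0$ at such zeros, the implicit function theorem upgrades the $0$-level set on $\{x>x_0\}$ into the $C^{3,\beta}$ graphs $y=\pm k(x)$.

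\emph{Step 2: linear asymptotics for the ends.} To obtain $k(x)=\kappa x+C+o(1)$ I would perform a blow-down: the rescalings $u_R(x,y):=u(Rx,Ry)$ converge along a subsequence to a limit whose nodal set is a union of rays through the origin, symmetric under $x\mapsto -x$. The rigidity of stable planar cones forces this union to be two lines. Coupled with Gui's Hamiltonian identity
\begin{equation*}
\frac{d}{dx}\int_{-\infty}^{\infty}\Bigl[\tfrac{1}{2}(u_x^2-u_y^2)-F(u)\Bigr]dy\;=\;0,
\end{equation*}
which at $x=0$ reduces to $-\int[\tfrac{1}{2}u_y^2(0,\cdot)+F(u(0,\cdot))]dy$ and at $x=+\infty$ computes to an explicit multiple of $\be$ depending on the asymptotic angle, the blow-down pins down the slope $\kappa$, and a matched-asymptotic analysis around each end then upgrades convergence to $C^{3,\beta}$ and produces the linear expansion for $k$.

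\emph{Step 3: moving plane in $y$.} With the asymptotic behavior in hand, set $v_\lambda(x,y):=u(x,2\lambda-y)$ and $w_\lambda:=v_\lambda-u$. The linear asymptote shows that for $\lambda$ sufficiently large the two upper ends of $u$, reflected to become the lower ends of $v_\lambda$, dominate the original lower ends while the intermediate strip is filled by values close to $1$, so $w_\lambda\ge 0$ everywhere. Decreasing $\lambda$ to the critical value $\lambda_0$ at which this first fails, and applying the strong maximum principle to the linear equation satisfied by $w_\lambda$ together with the asymptotic control at infinity (to rule out contact only at infinity), forces $w_{\lambda_0}\equiv 0$. Translating in $y$ so that $\lambda_0=0$ gives (\ref{axial-even}). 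The strict inequality $u_y<0$ on $\{x>0,\,y>0\}$ then follows from the strong maximum principle applied to $u_y$, which satisfies $\Delta u_y-F''(u)u_y=0$, vanishes on $\{y=0\}$, and is nonpositive on $\{y>0\}$; the Neumann datum $u_x(0,y)=0$ is preserved under reflection in $y$.

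The main obstacle will be Step 2: converting the blow-down into precise linear asymptotics for $k(x)$. The monotonicity $u_x>0$ on $\{x>0\}$ combined with evenness in $x$ supplies the rigidity needed to rule out blow-down cones with more than two rays, but extracting the quantitative angle $\kappa$ from the Hamiltonian identity and upgrading to $C^{3,\beta}$ asymptotic linearity along each end requires careful matched-asymptotic work. Once this is secured, Step 3 is a relatively standard sliding argument in the spirit of Berestycki-Caffarelli-Nirenberg.
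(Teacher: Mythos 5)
Your architecture (limit as $x\to\infty$, asymptotics of the $0$-level set via a Hamiltonian identity, then moving planes in $y$) matches the paper's, and Step 3 is essentially the paper's argument. But two steps contain genuine gaps. First, in Step 1 you claim that the alternatives $u^+=g(\pm(y-K))$ can be ruled out ``using the Neumann relation $u_x(0,y)=0$, the Hopf-lemma consequence $u_{xx}(0,y)>0$, and the sign structure of $F'$.'' No such short argument is available: the Hopf lemma only gives $F'(u(0,y))>u_{yy}(0,y)$, which is compatible with a one-dimensional limit profile. In the paper these cases are excluded only at the very end, by the full machinery: the energy bound $\rho(0)\le 3\be$ on the $y$-axis, the Hamiltonian identity $\rho(x)\equiv\rho(0)$, the existence of the limiting slopes $\theta_1,\theta_2$ of the level set, the rotated Hamiltonian identity forcing $\theta_1=-\theta_2$, and the computation showing $\theta_1<\pi/2$ (case (ii) forces $\theta_2=-\pi/2$, hence $\theta_1=\pi/2$, a contradiction). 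Relatedly, you never analyze the behavior as $|y|\to\infty$ (the paper's Lemma 3.3, $\lim_{|y|\to\infty}u(x,y)=-1$); without it you cannot convert the per-slice zeros $\pm x_0(y)$ into the two graphs $y=\pm k(x)$ over $x$, nor set up the moving plane in the strip $0<x<X_1$.

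Second, the blow-down in Step 2 is not justified as stated. Applying rigidity/$\Gamma$-convergence to $u(R\cdot)$ requires a linear energy growth bound $\cale_R(u)\le CR$ and stability, neither of which you establish: the positive Jacobi field $u_x$ only gives stability in each half-plane $\{\pm x>0\}$, not in $\Er^2$, and the energy bound is itself a consequence of the Hamiltonian identity in cones (the paper proves it only in Section 5, under an extra hypothesis). Moreover, Hausdorff convergence of the rescaled nodal set to two lines yields only $\gamma(y)/y\to\tan\theta_1$ along subsequences; the asymptotic expansion $k(x)=\kappa x+C+o(1)$ with a constant term requires the stronger statement $\gamma'(y)\to\tan\theta_1$ as a full limit, which the paper obtains directly from the limit of the flux $h(y)=\int_0^\infty u_yu_x\,dx$ (Lemma 3.4), and then the refined one-dimensional approximation lemma quoted from Gui's Hamiltonian-identity paper. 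Your acknowledgement that this upgrade ``requires careful matched-asymptotic work'' does not supply the missing hypothesis under which that work can be carried out. The paper's route --- flux function $h$, Hamiltonian identities in the original and rotated frames, and the quoted asymptotic-line lemma --- avoids the blow-down entirely and is what actually closes both of these gaps.
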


This symmetry result may be regarded as the counterpart of De Giorgi
conjecture for a half plane.

We shall prove Theorem \ref{axial-symm}  in  in three main steps.
First, we carry out a preliminary analysis of the 0-level set
$\Gamma$  of $u$ and show  that  $\Gamma$  can be regarded as graphs
of two $C^{3, \beta}$ functions $y=k_i(x), i=1, 2$ for $x>x_0$ large
enough; Second,  we show that  $ k(x)$ must be asymptotically
linear. Finally we use the moving plane method to conclude.

We shall also discuss  the even symmetry of entire  solutions whose
asymptotically behavior at infinity are roughly prescribed. For
example, we can show that  an entire solution with {\it finite Morse
index} and  {\it four ends} must be evenly symmetric in both $x$ and
$y$, after  a proper translation and rotation.  For a finite integer
$m\ge 0$, we say that a solution $u$ defined in $\Omega \subset
\Er^n$ has {\it finite Morse index} $ m $ if $m$ is the maximal
dimension of any linear subspace of Sobolev space $H^1(\Omega)$
contained in
\begin{equation}\label{morse}
 \caln:=\{ \phi \in  H^{1}(\Omega) :  \int_{\Omega} |\nabla \phi|^2
 +F''(u)\phi^2 dV < 0 \} \cap \{0\}.
 \end{equation}
If $m=0$, $u$ is also called a {\it stable } solution  in $\Omega$.
If an entire solution $u$  has
 { \it finite Morse index}, then  $u$ must  be
  {\it stable }  outside a  large enough ball $B_{R_0}$  (see \cite{cabre3} and \cite{cabre4}).

An entire solution $u$ is called a solution with {\it $2k$  ends}
for some positive integer $k$ if the $0$-level set $\Gamma$ of $u$
outside a large disc $B_{R}(0)$ consists of $2k$  imbedded $ C^{1} $
curves $\Gamma_i:=\{ \bigl(r_i(t), \theta_i(t) \bigr): \forall t \ge
0 \} \,\, 1\le i\le 2k$  in polar coordinates, and  $ r_i(t) \to
\infty$ as $ t \to \infty,$
$$
\Gamma_i \subset \{ (r, \theta): r\ge R, \,\, \theta_i^- < \theta
<\theta_i^+, \quad  1 \le i \le 2k \}
$$
where  $ 0\le  \theta_i^-<\theta_i^+ < \theta_{i+1}^- <
\theta_{i+1}^+< 2\pi,\,\,\,  1 \le i \le 2k-1$.

 We have the following symmetry result for entire solutions with {\it four ends}.

\begin{theorem}\label{four-ends}  Suppose that $u$ is an entire
 solution to \eqref{allencahn-general} with  {\it finite Morse index} and {\it four ends}.
  Assume also
\begin{equation}\label{tech-4}
0<\theta_i^+-\theta_i^-<\pi, \quad  1\le  i \le 4.
\end{equation}
Then,  after a proper translation and rotation,  $ u$ satisfies
\begin{equation}\label{symmetry}
u(x, y)=u(x, -y)=u(-x, y), \quad \forall  (x, y) \in \Er^2
\end{equation}
and
\begin{equation}\label{monotone-u}
u_{x}(x, y)>0,  \quad u_{y}(x,y)<0, \quad \forall   x>0, \, \, y>0
\end{equation}
and \eqref{limit} holds.   Moreover, there exists an angle $
\Theta=2\theta \in (0, \pi)$ such that the $0$-level set of $u$ in
the first quadrant is a graph of a $C^{3, \beta}$ function $y=k(x)$
for $ x>X_0$ large enough, and
$$
k(x) =x \tan \theta +o(1), \quad \text{as } x \to \infty.
$$
\end{theorem}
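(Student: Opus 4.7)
The plan is to use the finite Morse index hypothesis to control the asymptotic geometry of the four ends, and then reduce the symmetry claim to an application of Theorem \ref{axial-symm}.

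\textbf{Step 1 (asymptotic straightness of each end).} Finite Morse index implies that $u$ is stable outside a ball $B_{R_0}$, so the stability inequality $\int [|\nabla \phi|^2 + F''(u)\phi^2]\,dV \ge 0$ holds for all $\phi \in C_c^\infty(\Er^2 \setminus B_{R_0})$. The opening bound \eqref{tech-4} and the four-end structure let one run the same level-set analysis used in the proof of Theorem \ref{axial-symm}: in a suitably rotated frame near each end, $\Gamma_i$ is the graph of a $C^{3,\beta}$ function $y = k_i(x)$ for large $x$, and stability combined with blow-down arguments (so that every blow-down limit is a stable one-dimensional profile and must coincide with $g$ composed with a linear function) forces each end to be asymptotically a straight half-line with some direction $\alpha_i$.

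\textbf{Step 2 (angular balance via the Hamiltonian identity).} Applying the Hamiltonian identity in each coordinate direction yields two scalar conservation laws; evaluating them against the straight-line asymptotics of Step 1 and using that $u$ approaches $\pm 1$ on alternating sides across consecutive ends produces a vector balance of the form $\sum_{j=1}^{4} \varepsilon_j (\cos \alpha_j, \sin \alpha_j) = 0$, where $\varepsilon_j \in \{+1,-1\}$ records the orientation of the layer across $\Gamma_j$. The four-end topology forces $\varepsilon_j$ to alternate, and a short calculation then gives, after a suitable rotation, $\alpha_1 = \theta$, $\alpha_2 = \pi - \theta$, $\alpha_3 = \pi + \theta$, $\alpha_4 = -\theta$ for some $\theta \in (0,\pi/2)$. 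The asymptotic configuration is therefore symmetric about both coordinate axes.

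\textbf{Step 3 (moving planes and reduction to Theorem \ref{axial-symm}).} With the asymptotic data symmetric across both axes, I run the moving-plane method in $y$: starting from $y = +\infty$, where $u$ is close to translates of the one-dimensional layer $g$ along the two upper ends, I slide the plane $\{y = \lambda\}$ downward and apply the strong maximum principle to $w_\lambda(x,y) := u(x, 2\lambda - y) - u(x, y)$, concluding at $\lambda = 0$ that $u(x,y) = u(x,-y)$ and $u_y(x,y) < 0$ for $y > 0$. An identical argument in $x$ yields $u(x,y) = u(-x,y)$ and $u_x(x,y) > 0$ for $x > 0$, so the Neumann condition $u_x(0,y) = 0$ holds. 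Theorem \ref{axial-symm} is then directly applicable to $u$ on $\Er^2_+$ and produces the limit \eqref{limit} together with $k(x) = x \tan \theta + o(1)$.

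\textbf{Main obstacle.} The technical heart is Step 2: the two scalar Hamiltonian identities alone do not a priori force pairwise opposite angles, and one must exploit the topological data encoded by four ends (alternating $\varepsilon_j$) together with the opening bound \eqref{tech-4} to rule out degenerate configurations in which three ends cluster in one half-plane. Once the paired-angle symmetry of the ends is secured, Step 3 is comparatively standard, and the conclusions on the $0$-level set follow from Theorem \ref{axial-symm}.
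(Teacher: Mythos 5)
Your overall architecture parallels the paper's: (i) deduce from finite Morse index and \eqref{tech-4} that each end is asymptotically a straight line, (ii) pin down the angles with Hamiltonian identities, (iii) run the moving plane method. However, there is a genuine gap in Step 3. Symmetry of the asymptotic \emph{directions} is not enough to make the moving plane stop at a plane of symmetry: writing the four ends as $y=\tan(\theta_i)x+A_i+o(1)$, the plane $\{y=\lambda\}$ descending from $+\infty$ gets stuck at $\Lambda=\max\{(A_1+A_4)/2,\,(A_2+A_3)/2\}$, and at that level you only obtain $u_\Lambda\ge u$ in the upper half-plane; you get equality (hence \eqref{symmetry}) only if the two pairs of intercepts are centered at the same height, i.e. $A_1+A_4=A_2+A_3$. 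Nothing in your Steps 1--2 controls the intercepts. The paper closes exactly this gap with a second, moment-type conserved quantity, $E(x)=\int_{\Er} y\,[F(u)+\tfrac12 u_y^2-\tfrac12 u_x^2]\,dy\equiv C$, whose limits as $x\to\pm\infty$ are $(A_1+A_4)\be\cos\theta_1$ and $(A_2+A_3)\be\cos\theta_1$; equating them aligns the centers. Without this (or a substitute), your argument yields monotonicity above the critical plane but not even symmetry.

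Two further remarks. First, Step 1 is under-justified: saying that blow-down limits are stable one-dimensional profiles gives only large-scale flatness, not that the level set equals a line plus $o(1)$ with a finite intercept. In the paper, \eqref{tech-4} is used precisely to prove the linear energy bound $\cale_R(u)\le CR$, which feeds into Tonegawa's $\Gamma$-convergence theorem to show the rescaled zero sets converge to a union of rays; a continuation argument with a tilted Hamiltonian identity then shows the slope of the graph tends to a limit, and Lemma \ref{straightline} supplies the finite intercept. Your sketch never indicates where \eqref{tech-4} enters. Second, your "main obstacle" is misplaced: once the ends are straight and one normalizes $\theta_4=2\pi-\theta_1$ by a rotation, the two scalar balance identities together with the cyclic ordering of the four ends force the paired angles by elementary trigonometry, so Step 2 is the mechanical part; the real difficulties are the asymptotic straightness in Step 1 and the intercept alignment described above.
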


An entire solution $u$  with {\it four ends} may be called a {\it
saddle } solution.  The above theorem may be regarded as a form of
De Giorgi conjecture for saddle solutions.  The angle $\Theta$ may
be called the {\it contact angle} of $u$ (see \cite{gui1} for more
discussion).

For a given $\Theta \in (0, \pi)$, the uniqueness of  {  \it four
ends} entire solutions with {\it contact angle } $\Theta$ is still
unknown.  It is stated  in \cite{wei}   that the formal dimension of
the moduli space of entire solutions with  {\it $2k$ ends} is $2k$.
For $k=2$, it means that  formally there is  local uniqueness of
{\it saddle} solutions with a fixed {\it contact angle}, up to a
translation and rotation.  However, the global uniqueness is a very
different and more difficult question.

The condition \eqref{tech-4} is a technical condition and is
believed to be unnecessary.  However, we need it for the proof of an
energy bound in Lemma \ref{energy-monotone}  for  a functional
\begin{equation}\label{functional}
\cale_R(u):=\int_{B_R} \bigl( \frac{1}{2} |\nabla u|^2 +F(u) \bigr)
dxdy.
\end{equation}
If we assume the energy bound Lemma \ref{energy-monotone} directly
instead of \eqref{tech-4}, the conclusion of Theorem 1.2 still
holds. Indeed, we have the following general  energy quantization
result. Note that a different energy quantization phenomenon  has
been shown for Ginzburg-Landau equation (see \cite{brezis}).

\begin{theorem}\label{quant}
Assume that $u$ is an entire solution of \eqref{allencahn-general}
with {\it finite Morse index}. Then  there holds either
\begin{equation}\label{energy-infinity}
\lim_{R \to \infty} \cale_R(u)/R = \infty,
\end{equation}
or
\begin{equation}\label{energy-limit}
\lim_{R \to \infty} \cale_R(u)/R = 2k \be
\end{equation}  for some positive integer $k$.

In the latter case,  $u$ must be an entire solution with {\it $2k$
ends},  and  the $0$-level set of $u$ must be asymptotically
straight lines. Moreover, if we denote  the directions of these
lines by
 $\nu_i= \langle \cos \theta_i, \sin \theta_i\rangle, 1\le i \le
2k$, then
\begin{equation}\label{balance}
\sum_{i=1}^{2k}\nu_i=\langle 0,0 \rangle.
\end{equation}
\end{theorem}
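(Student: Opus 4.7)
The plan is to combine the classical monotonicity of $R\mapsto \cale_R(u)/R$ for two-dimensional Allen--Cahn with a blow-down/$\Gamma$-convergence argument, exploiting that finite Morse index forces $u$ to be stable outside some ball $B_{R_0}$. The first step is to verify that $\cale_R(u)/R$ is monotone non-decreasing once $R\ge R_0$, up to an error of size $O(R^{-1})$: differentiating this quotient and using the divergence of the stress-energy tensor $T_{ij}=u_iu_j-\delta_{ij}(\tfrac12|\nabla u|^2+F(u))$ together with the Modica inequality $|\nabla u|^2\le 2F(u)$, valid in the stability region, yields non-negativity of the derivative up to a compactly supported error. Consequently the limit $L:=\lim_{R\to\infty}\cale_R(u)/R\in[0,\infty]$ exists, and the dichotomy of the theorem reduces to the case $L<\infty$.

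Assume from now on that $L$ is finite. Consider the blow-down $u_R(x):=u(Rx)$, which satisfies $\Delta u_R-R^2 F'(u_R)=0$ and $\int_{B_1}\bigl(\tfrac{1}{2R}|\nabla u_R|^2+RF(u_R)\bigr)\,dx=\cale_R(u)/R\le L$. By Modica--Mortola type $\Gamma$-convergence, together with the fact that stability of $u$ outside $B_{R_0}$ passes to the limit as stability of the interface on $\Er^2\setminus\{0\}$, a subsequence of $u_R$ converges in $L^1_{loc}$ to $u_\infty\in\{-1,1\}$ whose interface $\partial\{u_\infty=1\}$ is locally perimeter-minimizing away from the origin. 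In the plane any such interface with linearly growing perimeter must be a finite union of straight rays through $0$; continuity of $u$ forces the number $2k$ of rays to be even, and the one-dimensional profile $g$ contributes energy exactly $\be$ per unit length of ray, giving $L=2k\be$.

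To transfer the geometric conclusions back to $u$, I would use that the $\Gamma$-convergence above is strong enough (via finite Morse index and Caffarelli--C\'ordoba type sheet estimates) that, outside a sufficiently large ball, the zero-set $\Gamma$ of $u$ lies in arbitrarily thin neighborhoods of the $2k$ rays. Stability in those thin neighborhoods combined with standard curvature-type estimates for the transition layer then expresses each component of $\Gamma$ as a $C^{3,\beta}$ graph over the corresponding ray whose slope converges to the slope of the ray. The balance identity $\sum_i\nu_i=\langle 0,0\rangle$ is finally obtained from the divergence-free property $\partial_j T_{ij}=0$: integrating over $B_R$ yields $\int_{\partial B_R}T\nu\,ds=\langle 0,0\rangle$; as $R\to\infty$ the flux concentrates near the $2k$ ends, and on each end the one-dimensional asymptotics together with the identity $\tfrac12(g')^2\equiv F(g)$ turn the local contribution into $-\be\,\nu_i$, whence $\sum_i\nu_i=\langle 0,0\rangle$.

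The main obstacle is the blow-down step: ruling out pathological limit configurations such as spiralling ends, accumulating ends, or several ends collapsing into a single ray, and ensuring that the combinatorial count $2k$ read from $u_\infty$ agrees with the number of topological ends of $u$ itself. Finite Morse index is essential here, both through stability outside $B_{R_0}$ and through the curvature and flatness estimates for level sets of stable Allen--Cahn solutions that stability implies; together these preclude the pathologies and pin down the integer $k$.
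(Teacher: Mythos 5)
Your proposal follows essentially the same route as the paper: Modica's monotonicity formula gives existence of $\lim_{R\to\infty}\cale_R(u)/R$, a blow-down combined with Tonegawa's $\Gamma$-convergence result for stable critical points yields convergence of the rescaled level set to a union of $2k$ rays together with the quantization $2k\be$, and a conservation-law (stress-energy/Hamiltonian) identity gives the balancing condition $\sum_i\nu_i=0$. The only cosmetic differences are that you integrate the stress tensor over large circles where the paper uses Hamiltonian identities along lines in each direction $\theta$, and that your restriction of the Modica inequality to the stability region is unnecessary --- it holds globally for bounded entire solutions, so the monotonicity of $\cale_R(u)/R$ is exact and needs no error term.
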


It is suspected that the first case in Theorem 1.3 may not happen at
all. It would be interesting to show  that only \eqref{energy-limit}
holds  and  for a given configuration $\nu_i, 1\le i\le k$ there
exist only two  corresponding solutions with opposite signs after a
proper translation. All entire solutions with {\it finite Morse
index} could then be classified accordingly.

We note that the existence of  entire solutions with {\it finite
Morse index} and {\it $2k$ ends } has been shown in \cite{wei}. It
was also pointed out in \cite{wei} that there may not be any
symmetry for entire solutions with six or more ends.   Note also
that \eqref{balance} implies \eqref{tech-4} for $k=2$.

The paper is organized as follows. In Section 2, some preliminary
results for entire solutions of Allen-Cahn equation in all
dimensions shall be stated. In Section 3, we will prove Theorem
\ref{axial-symm}. In Section 4,  A simpler version of  Theorem
\ref{four-ends} shall be proven.  Theorem \ref{four-ends} and the
energy quantization property will be proven in Section 5.

\section{Some Basic Properties}

In this section we shall state  some useful properties of entire
solutions to the Allen-Cahn equation.

We first state   a gradient estimate \eqref{allencahn-general} for
all dimensions which was proven in \cite{mod}.

\begin{proposition}
Assume that $F(s)\ge 0, \forall s \in [-1,1]$. Suppose that $u$ is a
solution to \eqref{allencahn-general}.  Then
\begin{equation}\label{gradient}
|\nabla u|^2(x,y)  \le 2F\bigl(u(x,y)\bigr), \quad (x,y) \in \Er^n.
\end{equation}
\end{proposition}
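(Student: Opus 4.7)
The plan is to establish the Modica gradient estimate by the classical P-function method. Define the auxiliary function
\begin{equation*}
P(x) := |\nabla u(x)|^2 - 2F(u(x)),
\end{equation*}
so that the desired conclusion is the pointwise inequality $P \le 0$ on $\mathbb{R}^n$. The first step is a direct computation: using $\Delta u = F'(u)$ and the identity $u_i (\Delta u)_i = F''(u)|\nabla u|^2$, one finds
\begin{equation*}
\Delta P = 2|D^2 u|^2 + 2 u_i(\Delta u)_i - 2 F''(u)|\nabla u|^2 - 2 F'(u)\Delta u = 2|D^2 u|^2 - 2 F'(u)^2.
\end{equation*}
This identity is clean but not yet a maximum principle, since the right-hand side has no sign a priori.

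Next I would convert this identity into a differential \emph{inequality} for $P$ by exploiting a Kato-type bound. Wherever $|\nabla u|>0$, a Cauchy--Schwarz estimate gives
\begin{equation*}
|D^2 u|^2 \ge \frac{|\nabla(|\nabla u|^2)|^2}{4|\nabla u|^2} = \frac{|\nabla P + 2F'(u)\nabla u|^2}{4|\nabla u|^2}.
\end{equation*}
Expanding the square and combining with the formula for $\Delta P$ yields, after a short manipulation, an inequality of the form
\begin{equation*}
\Delta P \ge \frac{\nabla u \cdot \nabla P}{|\nabla u|^2}\, F'(u) + \text{(nonnegative terms involving } P\text{)},
\end{equation*}
so that $P$ is a subsolution of a linear elliptic equation (with coefficients possibly singular at critical points of $u$) in the region $\{|\nabla u|>0\}$.

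The final step is a global maximum principle argument for entire solutions. Since $|u|\le1$ and $F\ge0$, we already have $P\le|\nabla u|^2$, and the gradient estimates for bounded solutions of \eqref{allencahn-general} imply $P$ is bounded. Suppose for contradiction that $\sup_{\mathbb{R}^n} P =: M > 0$. One argues that $M$ cannot be attained in the interior of $\{|\nabla u|>0\}$ by the strong maximum principle applied to the above subelliptic inequality, and that at points where $|\nabla u|=0$ the constraint $F\ge 0$ forces $P\le 0$ directly. A standard rescaling/translation argument on $\mathbb{R}^n$ (selecting a sequence $x_k$ along which $P(x_k)\to M$, translating, and extracting a locally uniformly convergent subsequence $u_k\to u_\infty$) reduces the non-attained case to the attained case for the limit solution $u_\infty$, yielding the contradiction $M\le 0$.

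The main obstacle is the behavior of the differential inequality at critical points of $u$, where the coefficient $1/|\nabla u|^2$ blows up: one must argue either by the explicit bound $P\le0$ at such points (since $|\nabla u|^2=0\le 2F(u)$) together with a careful decomposition of $\mathbb{R}^n$ into $\{|\nabla u|>0\}$ and its complement, or by regularizing $P$ (e.g.\ replacing $|\nabla u|^2$ by $|\nabla u|^2+\varepsilon$ in the coefficient and letting $\varepsilon \to 0$) so that the strong maximum principle applies uniformly.
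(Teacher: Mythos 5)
The paper does not actually prove this proposition: it is Modica's gradient bound, quoted directly from \cite{mod}. Your sketch is a reconstruction of Modica's original $P$-function argument, so there is no ``paper proof'' to diverge from; the only question is whether your sketch closes.

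The computations are right: $\Delta P = 2|D^2u|^2 - 2F'(u)^2$, and where $\nabla u \neq 0$ the Cauchy--Schwarz bound $|D^2u|^2\ge |\nabla(|\nabla u|^2)|^2/(4|\nabla u|^2)$ yields
\begin{equation*}
\Delta P - \frac{2F'(u)}{|\nabla u|^2}\,\nabla u\cdot\nabla P \;\ge\; \frac{|\nabla P|^2}{2|\nabla u|^2}\;\ge\;0
\end{equation*}
(your displayed version has $F'(u)$ instead of $2F'(u)$ and the remainder involves $\nabla P$, not $P$; harmless). The genuine gap is in the final step. The strong maximum principle does \emph{not} show that a positive supremum $M$ ``cannot be attained in the interior of $\{|\nabla u|>0\}$''; for a subsolution it shows that if $M$ is attained there, then $P\equiv M$ on the entire connected component $\Omega$ of $\{|\nabla u_\infty|>0\}$ containing that point, and this constancy case must still be excluded. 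Two standard ways to finish: if $\partial\Omega\neq\emptyset$, continuity produces a point with $\nabla u_\infty=0$ and $P=M>0$, contradicting $P=-2F\le 0$ at critical points; if $\Omega=\Er^n$, then $|\nabla u_\infty|^2=2F(u_\infty)+M\ge M>0$ everywhere, and along a unit-speed gradient flow line $\frac{d}{dt}u_\infty=|\nabla u_\infty|\ge\sqrt{M}$, so $u_\infty$ is unbounded, contradicting $|u_\infty|\le 1$. (Equivalently, the equality case of the Cauchy--Schwarz step reduces to a one-dimensional profile $g$ with $\tfrac12(g')^2-F(g)=M>0$, which forces $g$ unbounded.) With that addition the argument is complete; your compactness/translation reduction to the attained case and your handling of the singular coefficient at critical points are fine.
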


It is also well-known that $u$ has  the following exponential decay
with respect to distance from the level set.

\begin{proposition}\label{exponential1}
Assume that $u$ is a solution to \eqref{allencahn-general}. Then
there exists constants $C$ and $\nu>0$ such that
\begin{equation}\label{exponential}
|u^2-1|+|\nabla u|+|\nabla^2 u| \le C e^{ -\nu d(x, y)}
\end{equation}
where $d(x, y)$ is the distance to the 0-level set $\Gamma$ of $u$.
\end{proposition}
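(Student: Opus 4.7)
The plan is to reduce the full inequality to an exponential upper bound on $v(p):=1-u(p)$; WLOG $u(p)>0$ (the case $u(p)<0$ is symmetric, linearizing at $-1$), so by continuity $u\ge 0$ on $B_d(p)$ with $d:=d(p,\Gamma)$. The decay of $|\nabla u|$ will follow from the Modica gradient estimate $|\nabla u|^2\le 2F(u)\le C(1-u)^2$ near $u=1$, and the decay of $|\nabla^2 u|$ from interior Schauder applied to $\Delta u=F'(u)$ on a unit ball about $p$ (contained in $B_d(p)$ once $d\ge 2$). Thus everything rests on establishing $v(p)\le Ce^{-\nu d}$.

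For the barrier step I would use $F'(1)=0$ and $F''(1)>0$: Taylor expansion yields
\[
\Delta v=-F'(1-v)\ge c_0 v\quad\text{whenever }v\le\delta_0,
\]
with $c_0=F''(1)/2$ and a uniform $\delta_0>0$. Granted a threshold $v\le\delta_0$ on the ball $B_{d-R_0}(p)$ for some uniform $R_0$, one can compare $v$ against the radial supersolution $V(x)=\delta_0 \cosh(\alpha|x-p|)/\cosh(\alpha(d-R_0))$, with $\alpha>0$ chosen small enough that the transverse term $\tfrac{n-1}{r}V'$ is absorbed by $c_0 V$. The maximum principle for the operator $\Delta-c_0$ then delivers $v(p)\le V(p)\le 2\delta_0 e^{-\alpha(d-R_0)}$, the desired decay.

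The threshold itself would be proved by contradiction via a blow-up argument. A failing sequence of points $q_n$ with $d(q_n,\Gamma)\to\infty$ and $v(q_n)\ge\delta_0$ yields, after translation to the origin and using uniform $C^{2,\beta}$ bounds, a $C^2_{\mathrm{loc}}$-limit $u_\infty$ solving \eqref{allencahn-general} on $\Er^n$ with $u_\infty\ge 0$. Since $F'(s)\le 0$ on $[0,1]$, $u_\infty$ is bounded and superharmonic; in $\Er^2$ this forces $u_\infty$ constant, and the only constants in $[0,1]$ killed by $F'$ are $0$ and $1$. The value $1$ contradicts $u_\infty(0)\le 1-\delta_0$; excluding the value $0$ is the main obstacle, as superharmonicity alone does not rule it out. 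One has to use the instability of the equilibrium $u\equiv 0$ (via $F''(0)<0$, exploiting that the first Dirichlet eigenvalue of $-\Delta$ on $B_R$ drops below $|F''(0)|$ for $R$ large) to show that an Allen-Cahn solution cannot remain uniformly small on balls of arbitrarily large radius. Once this threshold is secured, the linearization at $u=1$ and the radial barrier computation are routine.
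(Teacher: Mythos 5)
Your route is genuinely different from the paper's. The paper settles this proposition in one line: on $B_R(p)$ with $R=d(p)$ the solution has a fixed sign, and one compares $|u|$ with the positive radial solution $u_R$ of the Allen--Cahn equation in $B_R$ with zero Dirichlet data (constructed as an energy minimizer, nontrivial for large $R$ because $F(0)>0=F(1)$); a sweeping argument gives $|u|\ge u_R$ and $1-u_R(0)\le Ce^{-\nu R}$, after which the gradient and Hessian bounds follow exactly as in your last two reductions. Your alternative --- a threshold via blow-up plus a linear barrier for $\Delta-c_0$ --- is a standard and legitimate scheme, and most of it is sound: the reduction of $|u^2-1|$ and $|\nabla u|$ to $v=1-u$ via \eqref{gradient} and $F(1-v)\le Cv^2$, the Schauder bootstrap for $|\nabla^2u|$, and the supersolution $V=\delta_0\cosh(\alpha r)/\cosh(\alpha(d-R_0))$ (using $\sinh(\alpha r)/r\le\alpha\cosh(\alpha r)$, so $\alpha^2\le c_0/n$ absorbs the transverse term) are all correct.

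The step that does not close under the paper's hypotheses is the one you yourself flag: excluding $u_\infty\equiv 0$. Condition \eqref{doublewell} only gives $F''(0)\le 0$ (it requires $F'>0$ on $(-1,0)$ and $F'<0$ on $(0,1)$, so $F'(h)/h<0$ on both sides, but the limit may vanish --- e.g.\ $F'(s)\sim -s^3$ near $0$ is admissible). Your eigenvalue comparison needs $-F'(s)\ge cs$ for small $s>0$, i.e.\ strict $F''(0)<0$, so as written the argument covers the model potential but not every $F$ allowed by \eqref{doublewell}. The robust fix is precisely the paper's: use the variational nondegeneracy $F(0)>0$ through the Dirichlet minimizer $u_R$, which supplies the threshold and the exponential rate simultaneously with no assumption on $F''(0)$. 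Two smaller points to make explicit if you keep your version: the instability argument must be applied to $u$ itself on a fixed ball $B_M(q_n)$ with $\lambda_1(B_M)<c$ and $n$ large (not to the limit $u_\infty$, which is an honest solution), with the case $u\equiv 0$ on a ball dismissed by unique continuation; and the Liouville step (bounded superharmonic implies constant) is special to dimension $2$, which is the setting of \eqref{allencahn-general}, so it is fine but should be stated as such.
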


This property can be proven by comparing $u$ with a solution $u_R>0$
of the Allen-Cahn equation in a ball $B_R$ centered at $(x, y)$ with
zero boundary condition, where $R=d(x, y)$. (See, e.g., \cite{gg}.)

The following monotonicity property of energy is shown in
\cite{mod2}.
\begin{proposition}\label{modica2}
Assume that $u$ is a solution to \eqref{allencahn-general}. Then
$\cale_R(u)/R$ is increasing in $R$.
\end{proposition}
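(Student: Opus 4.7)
The plan is to derive a Pohozaev-type identity by testing the equation against the radial vector field $\bx \cdot \nabla u$, and then exploit the gradient estimate of Proposition~2.1 (i.e., $|\nabla u|^2 \le 2F(u)$) to force the resulting bulk term to have a definite sign.

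First I would multiply the equation $\Delta u = F'(u)$ by $\bx \cdot \nabla u$ and integrate over $B_R$. On the right, $(\bx\cdot\nabla u)F'(u) = \bx \cdot \nabla F(u)$, and one integration by parts together with $\mathrm{div}(\bx) = 2$ gives
$$
\int_{B_R}(\bx\cdot\nabla u)F'(u)\,dA = R\int_{\partial B_R} F(u)\,d\sigma - 2\int_{B_R} F(u)\,dA.
$$
On the left, integration by parts together with the identity $\nabla(\bx\cdot\nabla u)\cdot\nabla u = |\nabla u|^2 + \bx\cdot\nabla(\tfrac12|\nabla u|^2)$ produces
$$
R\int_{\partial B_R}(\partial_\nu u)^2\,d\sigma - \frac{R}{2}\int_{\partial B_R}|\nabla u|^2\,d\sigma,
$$
where, crucially, the two interior $|\nabla u|^2$ contributions cancel exactly in dimension $n=2$.

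Equating the two expressions and recognizing $\cale_R'(u) = \int_{\partial B_R}\bigl(\tfrac12|\nabla u|^2 + F(u)\bigr)d\sigma$, I obtain the Pohozaev identity
$$
R\,\cale_R'(u) = 2\int_{B_R} F(u)\,dA + R\int_{\partial B_R}(\partial_\nu u)^2\,d\sigma.
$$
Subtracting $\cale_R(u) = \int_{B_R}\bigl(\tfrac12|\nabla u|^2 + F(u)\bigr)dA$ from both sides rearranges to
$$
R\,\cale_R'(u) - \cale_R(u) = \int_{B_R}\Bigl(F(u) - \tfrac12|\nabla u|^2\Bigr)dA + R\int_{\partial B_R}(\partial_\nu u)^2\,d\sigma,
$$
and the left-hand side is exactly $R^2\,\tfrac{d}{dR}\bigl(\cale_R(u)/R\bigr)$.

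To finish, I invoke Proposition~2.1: the gradient bound $|\nabla u|^2 \le 2F(u)$ makes the integrand $F(u) - \tfrac12|\nabla u|^2$ non-negative pointwise, so both terms on the right are $\ge 0$ and $\cale_R(u)/R$ is non-decreasing. The only real obstacle is bookkeeping the two integration-by-parts steps carefully; the decisive conceptual input is the Modica inequality, without which the bulk term would have indefinite sign and monotonicity would fail.
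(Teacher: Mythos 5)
Your argument is correct, and it is precisely the standard proof: the paper itself gives no proof of this proposition but simply cites Modica \cite{mod2}, whose argument is exactly the Pohozaev identity obtained by testing against $\bx\cdot\nabla u$ combined with the gradient bound \eqref{gradient} of Proposition 2.1. Your bookkeeping checks out in dimension two (the interior $|\nabla u|^2$ terms do cancel, yielding $R^2\frac{d}{dR}\bigl(\cale_R(u)/R\bigr)=\int_{B_R}\bigl(F(u)-\tfrac12|\nabla u|^2\bigr)\,dA+R\int_{\partial B_R}(\partial_\nu u)^2\,d\sigma\ge 0$), so nothing further is needed.
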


\section{Even symmetry of solutions in a half plane}

 We now consider  an entire solution $u$ which is even in $x$.  Note
 that $u$ may  be regarded as a solution  of \eqref{2d-ac} in a half
plane.

 We  first study  the limit of $u(x, y)$
as $x$ goes to infinity.

 Define
$$
u^{\tau}(x, y):= u( \tau+x, y), \quad x \ge -\tau, \, \, \forall y
\in \Er.
$$
It is easy to see that $u^{\tau}(x, y)$ converges to  some function
$u^+(y)>-1$ in $C^{3}_{loc}(\Er^2)$ as $\tau$ goes to infinity,  and
$u^+(y)$ satisfies one dimensional  Allen-Cahn equation
\begin{equation}\label{allencahn-1d}
u_{yy}-F'(u)=0, \quad y \in \Er.
\end{equation}
Let
$$
\sigma^{\tau} (x, y)= \frac{ u^{\tau}_{x}(x, y)}{ u^{\tau}_{x}(0,
0)}>0, \quad \forall x \ge -\tau, \,\, y \in \Er.
$$
By the Harnack inequality and the gradient estimate for elliptic
equations, we know that $\sigma^{\tau} (x, y)$ converges to
$\sigma^*(x, y)>0$  in  $C^{2}_{loc}(\Er^2)$ as $\tau$ goes to
infinity, and $\sigma^*(x,y)$ satisfies the linearized equation of
Allen-Cahn equation
\begin{equation}\label{linear-ac}
\sigma_{xx}+\sigma_{yy}-F''(u^+(y))\sigma=0, \quad (x,y) \in \Er^2.
\end{equation}
Hence $u^+$ is a stable solution of \eqref{allencahn-1d}.  Then
there are three possibilities for $u^+$:

\begin{enumerate}

\item[(i)] $u^+ \equiv 1$;

\item[(ii)] $ u^+(y)=g(y-K)$ for some constant $K$;

\item[(iii)] $ u^+(y)=g(K-y)$ for some constant $K$.

\end{enumerate}

The goal is to show that only (i) holds.  To do so,  we shall  prove
several basic properties for $u$.    The first property is an energy
estimate of $u$ on a line.

\subsection{Energy estimate}

 We first show a simple but  important lemma regarding   the energy
 of $u$ on  $y$-axis.

\begin{lemma}
Suppose that $u$ is a solution to \eqref{2d-ac} and
\eqref{axial-monotone}.  Then
\begin{equation}\label{int-ur}
\int_{\Er}  [ F \bigl(u(0, y)\bigr) +\frac{1}{2} u_y^2(0, y)]dy<
3\be.
\end{equation}
\end{lemma}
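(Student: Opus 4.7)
The plan is to use a Hamiltonian-type identity in the $x$-direction. Multiplying \eqref{2d-ac} by $u_x$ and reorganizing gives the divergence identity
\[
\partial_x\bigl[\tfrac{1}{2}(u_y^2-u_x^2)+F(u)\bigr]=\partial_y(u_xu_y).
\]
Integrating over the rectangle $[0,R]\times[-L,L]$ and using the Neumann condition $u_x(0,y)=0$ at $x=0$, one rewrites this as
\[
\int_{-L}^{L}\bigl[\tfrac{1}{2}u_y^2(0,y)+F(u(0,y))\bigr]\,dy
=\int_{-L}^{L}\bigl[\tfrac{1}{2}(u_y^2-u_x^2)+F(u)\bigr](R,y)\,dy
+\int_0^R\bigl[u_xu_y(x,-L)-u_xu_y(x,L)\bigr]\,dx.
\]

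For the first right-hand integral, I would drop the nonpositive $-u_x^2$ contribution and send $R\to\infty$. By the $C^2_{\mathrm{loc}}$ convergence $u(R+\cdot,\cdot)\to u^+(\cdot)$ discussed above and the pointwise gradient bound of Proposition 2.1 as a dominating envelope on the compact interval $[-L,L]$, the limit is at most $\int_{-L}^{L}\bigl[\tfrac{1}{2}(u^+)'{}^2+F(u^+)\bigr]\,dy\le\be$, since $u^+$ is either the constant $\pm 1$ or a translate of the one-dimensional minimizer $g$, which has total energy $\be$.

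The delicate step is the pair of horizontal boundary integrals $\int_0^R u_xu_y(x,\pm L)\,dx$. The gradient estimate $|u_xu_y|\le F(u)$ bounds each of these by $\int_0^R F(u(x,\pm L))\,dx$. I would then select $L=L_n\to\infty$ along a sequence for which these contributions remain controlled, via a Fubini-averaging over $L$ in a dyadic interval combined with the exponential-in-distance decay of Proposition 2.2 (the zero level set of $u$ lies essentially in a horizontal strip for large $x$, by the structure of $u^+$, so $F(u(x,y))$ decays exponentially for $|y|$ large and $x$ large). Each horizontal contribution can then be bounded by $\be$ in the limit, and passing $L=L_n\to\infty$ on the left-hand side by monotone convergence yields the bound $\be+2\be=3\be$, with strict inequality since the three estimates cannot all saturate simultaneously. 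The principal obstacle is precisely this control of the horizontal boundary terms, which is the source of the looseness in the constant $3$ rather than the sharp $1$.
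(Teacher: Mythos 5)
Your overall bookkeeping ($\be$ from the far vertical side plus $\be$ from each horizontal side) matches the paper's, and the divergence identity you integrate is the right one. The genuine gap is in your control of the horizontal boundary terms. The bound $|u_xu_y|\le\tfrac12|\nabla u|^2\le F(u)$ leads to $\int_0^R F(u(x,\pm L))\,dx$, and this quantity is not bounded by $\be$ --- in general it is not even bounded as $R\to\infty$: along a fixed horizontal line $u(x,L)\to u^+(L)$, and if $u^+$ is a translate of $g$ (cases (ii)--(iii), which cannot be excluded at this stage) then $F(u(x,L))$ tends to a positive constant and the integral diverges linearly in $R$; even when $u^+\equiv 1$ no rate of convergence is available here. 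The Fubini/dyadic averaging in $L$ cannot repair this, since the divergence is in the $x$-direction; moreover your supporting claim that the zero level set lies essentially in a horizontal strip is false --- the paper later shows the level set is asymptotically linear with a \emph{nonzero} slope, so it crosses every horizontal line $y=\pm L$ arbitrarily far out, and the exponential decay of Proposition \ref{exponential1} gives nothing along those lines.

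The missing idea is to exploit the monotonicity \eqref{axial-monotone} together with the gradient bound in the componentwise form $|u_y|\le|\nabla u|\le\sqrt{2F(u)}$, and then change variables $t=u(x,y)$, $dt=u_x\,dx$, along each horizontal ray:
\begin{equation*}
\Bigl|\int_0^\infty u_x u_y(x,\pm L)\,dx\Bigr|\le\int_0^\infty \sqrt{2F(u)}\,u_x\,dx=\int_{u(0,\pm L)}^{u^+(\pm L)}\sqrt{2F(t)}\,dt<\int_{-1}^{1}\sqrt{2F(t)}\,dt=\be .
\end{equation*}
This gives the bound $\be$, with strict inequality, for \emph{every} $L$ rather than along a selected sequence, and the lemma then follows from your rectangle identity upon letting $R\to\infty$ and then $L\to\infty$. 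This is exactly how the paper argues (it works with $h(y)=\int_0^\infty u_yu_x\,dx$ and differentiates in $y$ instead of integrating over a truncated rectangle, but that difference is cosmetic).
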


\begin{proof}

Define
$$
h(y)=\int_{0}^{\infty} u_y  u_x dx, \quad \forall y \in \Er.
$$
In view of \eqref{gradient} and  the positivity of $u_x$, it is easy
to see that $h(y)$ is well-defined and
$$
|h(y)|< \int_{0}^{\infty} \sqrt{2 F\bigl(u(x,y)\bigr) } \cdot u_xdx
\le \be-G\bigl(u(0, y)\bigr)<\be, \quad  \forall  y \in \Er
$$
where
\begin{equation}\label{G}
G(t):=\int_{-1}^{t} \sqrt{2 F(s)} ds, \quad \forall t \in [-1, 1].
\end{equation}

Differentiating $h(y)$ with respect to $y$ and using \eqref{2d-ac},
we obtain
\begin{equation}\label{estimate-ur}
\begin{split}
&h'(y)=\int_{0}^{\infty}( u_{yy}u_x+ u_{y} u_{xy}) dx\\
=&\int_{0}^{\infty}  [\frac{\partial}{\partial x}
\bigl(F(u)-\frac{1}{2} u_{x}^2
 +\frac{1}{2} u_y^2  \bigr) ]dx\\
=&  [ F \bigl(u^+( y)\bigr) +\frac{1}{2} (u^+_y)^2(y) ] -[ F
\bigl(u(0, y)\bigr) +\frac{1}{2} u_y^2(0, y) ].
\end{split}
\end{equation}
Here we have used the facts  $u_x(0, y)=0$ and $\lim_{x \to \infty}
u_x(x,y)=0, \forall y \in \Er$.  Then,  we derive
\begin{equation}\label{int-ur2}
\begin{split}
 & \int_{a}^{b} [ F \bigl(u(0, y)\bigr)
+\frac{1}{2} u_y^2(0, y) ]dy  \\
=&\int_{a}^{b} [ F \bigl(u^+( y)\bigr) +\frac{1}{2} (u^+_y)^2( y) ]
dy  + \bigl(h(a)-h(b) \bigr).\\
\end{split}
\end{equation}

Define
\begin{equation}\label{rho}
\rho(x)= \int_{\Er} [ F \bigl(u(x,y)\bigr) +\frac{1}{2} u_y^2(x,
y)-\frac{1}{2} u_{x}^2 (x, y)]dy
\end{equation}
and
\begin{equation}\label{rho^+}
\rho^+= \int_{\Er} [ F \bigl(u^+(y)\bigr) +\frac{1}{2} (u^+_y)^2(
y)]dy.
\end{equation}

 Then, letting $a \to -\infty $ and $b \to +\infty$ in \eqref{int-ur2},    in view of the bound of
 $h(y)$ we obtain
\begin{equation}\label{int-ur3}
  \rho(0) = \rho^+ \ + \lim_{a \to -\infty} h(a)-\lim_{b \to \infty} h(b) \le 3\be.
\end{equation}

Therefore,  \eqref{int-ur} is proven.

\end{proof}

\subsection{ A Hamiltonian identity}

Next we  shall show a Hamiltonian identity for  solutions of
\eqref{2d-ac}.

\begin{lemma}\label{axial-hamiltonian}\
Assume that $u(x,y)$ satisfies \eqref{2d-ac} and
\eqref{axial-monotone}. Then
\begin{equation}\label{eq:axial-ham}
\rho(x)=\rho(0), \quad \forall x \in \Er_+.
\end{equation}
\end{lemma}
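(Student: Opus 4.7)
The strategy is to integrate the pointwise divergence identity
$$
\bigl(\tfrac{1}{2}u_x^2 - \tfrac{1}{2}u_y^2 - F(u)\bigr)_x + (u_x u_y)_y = 0,
$$
which follows from multiplying \eqref{2d-ac} by $u_x$ and using $u_y u_{xy} = \tfrac12 (u_y^2)_x$, over a rectangle $[x_1,x_2]\times[-R,R]$ and pass to the limit $R\to\infty$. The resulting identity reads
$$
\rho^{[-R,R]}(x_1) - \rho^{[-R,R]}(x_2) = -\int_{x_1}^{x_2}\bigl[u_x u_y(x,R) - u_x u_y(x,-R)\bigr]\, dx,
$$
where $\rho^{[-R,R]}(x) := \int_{-R}^R [F(u) + \tfrac12 u_y^2 - \tfrac12 u_x^2]\, dy$. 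The proof then reduces to showing (i) $\rho^{[-R,R]}(x) \to \rho(x)$ as $R\to\infty$ for each $x\ge 0$, and (ii) the boundary integral on the right tends to $0$.

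For (i), I would mimic the preceding lemma with the lower endpoint of the inner integral shifted from $0$ to $x$: set $h_x(y) := \int_x^{\infty} u_y u_x\, dx'$. The gradient estimate and positivity of $u_x$ give the uniform bound $|h_x(y)| \le G(u^+(y)) - G(u(x,y)) \le \be$, and the calculation leading to \eqref{estimate-ur} yields
$$
h_x'(y) = \bigl[F(u^+) + \tfrac12 (u^+_y)^2\bigr] - \bigl[F(u(x,y)) + \tfrac12 u_y^2(x,y) - \tfrac12 u_x^2(x,y)\bigr].
$$
Since $\tfrac12 u_x^2 \le F(u)$ by the gradient estimate, the right-hand bracket is the sum of two nonnegative terms $F(u)-\tfrac12 u_x^2$ and $\tfrac12 u_y^2$; integrating in $y$ and combining with the finiteness of $\rho^+$ (which is either $0$ or $\be$, according to the three possibilities for the 1D stable limit $u^+$) and the uniform bound on $h_x$ shows $\rho^{[-R,R]}(x)\nearrow \rho(x)<\infty$ as $R\to\infty$. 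In particular $\int_{\Er} u_y^2(x,y)\, dy < \infty$ for every $x\ge 0$.

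For (ii), the integrability from (i) combined with the uniform Lipschitz control on $y\mapsto u_y^2(x,y)$ (from interior elliptic regularity together with the gradient bound) forces $u_y(x, \pm R) \to 0$ as $R\to\infty$ for each fixed $x$; together with the $L^\infty$ bound on $u_x$, this gives $u_x u_y(x,\pm R)\to 0$ pointwise. The uniform pointwise bound $|u_x u_y| \le F(u) \le \max_{[-1,1]}F$, which is another consequence of the gradient estimate, then allows the dominated convergence theorem on the compact interval $[x_1,x_2]$ to conclude that the right-hand side tends to $0$. Passing to the limit yields $\rho(x_1) = \rho(x_2)$ for all $x_1,x_2\ge 0$, which is \eqref{eq:axial-ham}. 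The main obstacle is the slice-wise decay $u_y(x,y)\to 0$ as $|y|\to\infty$; this is where the $y$-integrability established in step (i), rather than any a priori decay for solutions in the half-plane, does the real work.
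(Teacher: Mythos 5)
Your proof is correct, and it rests on the same Hamiltonian (divergence/flux) identity as the paper's; the difference lies entirely in how the flux through the horizontal boundary is killed. The paper works on the full strip $[0,R]\times\Er$ with the flux function $h(R,y)=\int_0^R u_yu_x\,dx$ and disposes of the boundary terms by first proving genuine pointwise decay: using the energy bound of the preceding lemma it shows that the translated limits $v^{\pm}(x,y)=\lim_{t\to\pm\infty}u(x,y+t)$ are themselves solutions of \eqref{2d-ac}, hence constants $\pm1$, which forces $u_x,u_y\to0$ as $|y|\to\infty$ locally uniformly in $x$ and gives $h(R,y)\to0$ directly. You instead truncate in $y$, and extract from the bounded flux $h_x(y)=\int_x^\infty u_yu_x\,dx'$ only the integrability $\int_{\Er}u_y^2(x,\cdot)\,dy<\infty$ (together with finiteness of $\rho(x)$, which the lemma's statement tacitly needs and which you make explicit — a nice point), then upgrade integrability to slicewise decay of $u_y$ via the uniform $C^2$ bounds and finish with dominated convergence, using $|u_xu_y|\le\tfrac12|\nabla u|^2\le F(u)$. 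Your route avoids the Liouville-type rigidity step for $v^{\pm}$ and is in that sense more elementary and self-contained; the paper's route yields the stronger conclusion $\lim_{|y|\to\infty}\nabla u(x,y)=0$ and the identification $|v^{\pm}|=1$, which it needs anyway in Lemma 3.3 and in the subsequent level-set analysis, so the extra work is not wasted there. Both arguments are sound.
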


\begin{proof}

By  \eqref{int-ur} and the boundedness of $u$ in $C^3(\Er^n)$, we
know that  the following limits exist
$$
v^+:=\lim_{ y \to \infty} u(0, y), \quad  v^-:=\lim_{ y \to -\infty}
u(0, y)
$$
and
$$ |v^+|=1, \quad |v^-|=1.$$
Indeed, by the standard  translation argument it can be shown that
$$
v^+(x, y):=\lim_{ t \to \infty} u(x, y+t), \quad v^-(x, y):=\lim_{ t
\to -\infty} u(x, y+t)
$$
exist and are solutions to \eqref{2d-ac}, and hence
$$
v^+(x, y) \equiv v^+, \quad v^-(x, y) \equiv v^-, \quad (x,y) \in
\Er^2.
$$

In particular,
\begin{equation}\label{limit-ur}
\lim_{ |y| \to \infty} u_x(x, y)=0,  \quad \lim_{ |y| \to \infty}
u_y(x, y)=0,\quad \forall x \ge 0.
\end{equation}

Define
$$
h(R, y):=\int_{0}^{R} u_y  u_x dx, \quad \forall y \in \Er.
$$
Then, in view of \eqref{limit-ur}, we have
$$
\lim_{|y| \to \infty} h(R, y)=0, \quad \forall R \ge 0.
$$

As before, differentiating $h(R, y)$ with respect to $y$ and using
\eqref{2d-ac},  we can obtain
\begin{equation*}
\begin{split}
h'(R, y)=&\int_{0}^{R}( u_{yy}u_x+ u_{y} u_{xy}) dx\\
=&\int_{0}^{R}  [\frac{\partial}{\partial x} \bigl(F(u)-\frac{1}{2}
u_{x}^2  +\frac{1}{2} u_y^2  \bigr)]dx\\
=& [ F \bigl(u(R, y)\bigr) +\frac{1}{2} u_y^2(R, y) -\frac{1}{2}
u_x^2(R, y)]-[ F \bigl(u(0, y)\bigr) +\frac{1}{2} u_y^2(0, y) ].
\end{split}
\end{equation*}
Then, integrating the above with respect to $y$ in $\Er$,  we derive
\begin{equation}\label{int-ur-R}
 \rho(0)-\rho(R)=\lim_{a \to -\infty} h(R, a) -\lim_{b \to \infty} h(R, b)=0.
\end{equation}
The lemma is proven.

\end{proof}

We can indeed show the following limit.
\begin{lemma}\label{limit-u}
\begin{equation}\label{limit-u0}
\lim_{ |y| \to \infty} u(x, y)=-1, \quad \forall x \in \Er.
\end{equation}
\end{lemma}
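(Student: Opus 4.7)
The plan is to combine the Hamiltonian identity $\rho(x)\equiv\rho(0)$ of Lemma \ref{axial-hamiltonian} with the three possibilities (i)--(iii) for $u^+$ to rule out every configuration other than $v^+ = v^- = -1$.

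First, the monotonicity \eqref{axial-monotone} together with $u(x,y) \to u^+(y)$ gives $u(0,y)\le u^+(y)$. In case (ii), where $u^+(y) = g(y-K)\to -1$ as $y\to -\infty$, this forces $v^- = -1$; analogously in case (iii), $v^+ = -1$. So the remaining ``bad'' scenarios to rule out are: $v^+ = 1$ in case (ii), $v^- = 1$ in case (iii), and $(v^+,v^-)\ne(-1,-1)$ in case (i).

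For each bad scenario in which $v^+\ne v^-$, the profile $u(0,\cdot)$ (whose one-dimensional energy equals $\rho(0)$, since $u_x(0,y)=0$) connects $\pm 1$ to $\mp 1$, so the Modica-type inequality
\[
\rho(0) \ge \Bigl|\int_\Er u_y(0,y)\sqrt{2F(u(0,y))}\,dy\Bigr| = |G(v^+)-G(v^-)| = \be.
\]
On the other hand, the integrand of $\rho(x)$ is supported, up to exponentially small corrections supplied by Proposition \ref{exponential1}, on a neighborhood of $\Gamma\cap(\{x\}\times\Er)$; in each bad scenario with $v^+\ne v^-$ this consists of a single arc along which $u$ is close to $\pm g$ in the transversal variable, so an asymptotic-profile computation yields $\lim_{x\to\infty}\rho(x) \le \be$ with equality only when the arc is asymptotically horizontal. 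Combining with $\rho(x)\equiv\rho(0)\ge\be$ forces $\rho(0)=\be$ and zero asymptotic slope. Equality in Modica then gives $u(0,y) = g(y-K')$ (or $g(K'-y)$); coupled with $u_x(0,y)=0$, unique continuation for the semilinear elliptic equation \eqref{allencahn-general} yields $u\equiv g(\,\cdot\,-K')$ on $\Er^2$, contradicting \eqref{axial-monotone}. The one remaining scenario $v^+ = v^- = 1$ (possible only in case (i), where $\Gamma$ is then compact) is closed directly: Proposition \ref{exponential1} gives $\rho(x)\to 0$, contradicting $\rho(x)\equiv\rho(0) > 0$, the latter since $|u|<1$ strictly forces $F(u(0,y))>0$ pointwise.

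The main obstacle I anticipate is the asymptotic $L^1$-control of $\rho(x)$ in the $y$-direction: the integrand only converges pointwise, so one needs the uniform exponential decay away from $\Gamma$ given by Proposition \ref{exponential1}, together with a preliminary description of the interface arcs for $x$ large, to justify the interface-by-interface accounting. The closing unique continuation step is standard for \eqref{allencahn-general} with $F\in C^{2,\beta}$.
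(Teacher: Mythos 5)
Your overall strategy---Hamiltonian identity plus the Modica-type lower bound $\rho(0)\ge\be$, forcing equality, rigidity of the equality case, and a maximum-principle contradiction with \eqref{axial-monotone}---is the same as the paper's, and your reduction of cases (ii), (iii) via monotonicity and your closing rigidity step are sound. The genuine gap is exactly the step you flag as the main obstacle: the upper bounds $\lim_{x\to\infty}\rho(x)\le\be$ (and $\rho(x)\to0$ in the $(v^+,v^-)=(1,1)$ scenario). You justify them by asserting that for large $x$ the integrand of $\rho(x)$ concentrates on a single interface arc carrying a $g$-like transversal profile. But no such description of $\Gamma$ is available at this stage: that the $0$-level set escapes to infinity, is eventually a graph, has a limiting slope, and is asymptotically one-dimensional is established only in Subsections 3.3--3.5, and those arguments take \eqref{limit-u0} as input (e.g.\ the divergence $\lim_{|y|\to\infty}\gamma(y)=\infty$), so your route risks circularity. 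Without that structure, pointwise convergence $u(x,\cdot)\to u^+$ together with Proposition \ref{exponential1} does not control the mass of $\rho(x)$ escaping to $y=\pm\infty$: a priori several interfaces could contribute (indeed, once the asymptotics are known one gets $\rho\equiv\be(\sin\theta_1-\sin\theta_2)$, which can be as large as $2\be$). Similarly, in the $(1,1)$ scenario you need $\Gamma$ to be compact, which does not follow from the pointwise limits you have.

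The paper sidesteps all of this by evaluating $\rho$ at $x=0$ rather than at $x=\infty$. It introduces $h(y)=\int_0^\infty u_xu_y\,dx$, computes $h'(y)=[F(u^+)+\tfrac12(u^+_y)^2]-[F(u(0,y))+\tfrac12 u_y^2(0,y)]$, and uses the elementary bound $|h(y)|\le\be-G\bigl(u(0,y)\bigr)$, which needs only Modica's gradient estimate \eqref{gradient} and $u_x>0$---no level-set information at all. Integration gives $\rho(0)=\rho^++\lim_{a\to-\infty}h(a)-\lim_{b\to\infty}h(b)$, and since $h(y)\to0$ along any end where $u(0,y)\to1$ while $|h|\le\be$ always, each bad subcase yields either $\rho(0)=0$ or $\rho(0)\le\be$ immediately. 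To repair your argument, replace the asymptotic-profile computation at $x=\infty$ by this $h$-function bookkeeping on the line $x=0$.
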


\begin{proof}

We shall show the lemma by considering different cases.

In Case (i), i.e.,  $u^+ \equiv 1$,  there are four possibilities:

\noindent
\begin{enumerate}
\item   $ v^+ = 1, \quad v^- =1;$
\item   $ v^+ = -1, \quad v^- =1;$
\item   $ v^+ = 1, \quad v^- =-1;$
\item   $ v^+ = -1, \quad v^- =-1;$

\end{enumerate}

From \eqref{int-ur3} and the Hamiltonian identity
\eqref{eq:axial-ham} we have
\begin{equation}\label{limit-h1}
\lim_{a \to -\infty} h(a)-\lim_{b \to \infty} h(b)+\rho^+= \rho(0)=
\lim_{x \to \infty} \rho(x).
\end{equation}

In Subcase (1),  we can estimate
$$
\lim_{a \to -\infty} |h(a)|\le \lim_{a \to -\infty}[G(1)- G\bigl(
u(0,a)\bigr)] =0
$$
and
$$
\lim_{b \to \infty} |h(b)|\le \lim_{b \to \infty} [G(1)- G\bigl(
u(0,b)\bigr) ]=0.
$$
Then \eqref{int-ur3} becomes
\begin{equation*}
  \rho(0) =\lim_{a \to -\infty} h(a)-\lim_{b \to \infty} h(b)=0.
\end{equation*}
This is a contradiction, and therefore Subcase (1) is excluded.

In Subcase (2),  we can estimate
$$
\lim_{a \to -\infty} |h(a)|\le \lim_{a \to -\infty} [ G(1)-G\bigl(
u(0,a)\bigr)] \le \be
$$
and
$$
\lim_{b \to \infty} |h(b)|\le \lim_{b \to \infty}[G(1)- G\bigl(
u(0,b)\bigr)] =0.
$$
Then \eqref{int-ur3} becomes
\begin{equation*}
\rho(0)\le \be.
\end{equation*}

On the other hand, by the definition of $\be$, we have $ \rho(0) \ge
\be $.  Then we have $ u(0, y) = g(\pm y+ K_1)$ for some $K_1 \in
\Er$. Then $u(x, y)-u(0, y) $  is nonnegative  and satisfies a
linearized equation of \eqref{allencahn-general}. By the Harnack
inequality, we can derive $u(x, y) \equiv u(0, y)$.  This
contradicts with \eqref{axial-monotone},  and hence Subcase (2) is
excluded.  Subcase (3) is similar to Subcase (2). The lemma then
follows easily from \eqref{limit-ur}.

\noindent In Case (ii), i.e., $u^+(y)= g(y-K)$,  in view of the
monotone condition \eqref{axial-monotone} we know only Subcases (2)
and (4) are possible. If Subcase (2) happens, then \eqref{int-ur3}
becomes
\begin{equation*}
  \rho(0)=\be+\lim_{a \to -\infty} h(a)-\lim_{b \to \infty} h(b)=\be.
\end{equation*}
 Since $\rho(0) \ge \be $,  we get a contradiction immediately  as in Case (i).    Therefore
 Subcase (2) is excluded.

 Case (iii) is similar to Case (ii).   In all cases, we have proven
 that only Subcase (4) holds.  Hence  \eqref{limit-u0} is proven.

\end{proof}

In the level set analysis below,  we shall focus on  Case (i):  $u^+
\equiv 1$.   The other two cases can be discussed similarly with
minor modifications, and can be excluded eventually at the end of
this section.

In view of  \eqref{axial-monotone} and \eqref{limit-u0},  the
0-level set $\Gamma$ of $u$ can be represented by the graph of a
function $x=\gamma(y)$ which is defined for $ y\le K_1,$ and $ y\ge
K_2$ with $K_1\le K_2$ and is $C^3$.   By Lemma \eqref{limit-u}, we
also know
\begin{equation}\label{limit-gamma}
\lim_{ |y| \to \infty} \gamma(y) = \infty.
\end{equation}

\subsection{ The slope of the level set has a limit}
First we show the  limits of $\gamma'(y)$ exist  as $y \to \pm
\infty$.
\begin{lemma}\label{limit-gamma-der}
There exist  $\theta_1 \in [0, \pi/2]$ and $\theta_2  \in [-\pi/2,
0] $ such that
\begin{equation}\label{lim-slope}
\lim_{ y \to \infty} \gamma'(y)=\tan{\theta_1}, \quad
 \lim_{ y \to -\infty}
\gamma'(y)=\tan{\theta_2}.
\end{equation}
Here we use the convention that  $\tan{(\pi/2)}=\infty,
\tan{(-\pi/2)}=-\infty$.
\end{lemma}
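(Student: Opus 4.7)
My strategy is to extract a one-dimensional limiting profile of $u$ by translating along the level set, and then to use the Hamiltonian identity \eqref{eq:axial-ham} to pin down the slope of this profile uniquely, independently of the translation sequence.

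First, given $y_n \to \infty$, form the level-set translates $u_n(x,y) := u(x+\gamma(y_n), y+y_n)$. These satisfy \eqref{allencahn-general} with $u_n(0,0)=0$ and $(u_n)_x \ge 0$, on domains that exhaust $\Er^2$. By the uniform bound $|u|\le 1$ and standard elliptic regularity, a subsequence converges in $C^2_{\mathrm{loc}}(\Er^2)$ to a bounded solution $u_\infty$ on $\Er^2$ with $u_\infty(0,0)=0$ and $(u_\infty)_x \ge 0$. Since $u_x > 0$ in $\{x>0\}$ is a positive solution of the linearized equation, $u$ is stable on $\{x>0\}$; as $\gamma(y_n) \to \infty$, this stability is inherited by $u_\infty$ on all of $\Er^2$. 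Invoking the two-dimensional rigidity theorem for bounded stable solutions of Allen--Cahn (and the strong maximum principle in the degenerate case $(u_\infty)_x \equiv 0$), $u_\infty$ must be one-dimensional: $u_\infty(x,y) = g(\alpha x + \beta y)$ for some unit vector $(\alpha,\beta)$ with $\alpha \ge 0$. Passing to the limit in the implicit-function formula $\gamma'(y_n) = -u_y/u_x$ on the level set gives $\gamma'(y_n) \to -\beta/\alpha$ (interpreted as $\pm\infty$ when $\alpha = 0$).

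Second, to remove subsequence-dependence, I pass to the limit in the Hamiltonian identity: by \eqref{eq:axial-ham}, $\rho_{u_n}(0) = \rho(\gamma(y_n)) = \rho(0)$ for every $n$, and the exponential decay of Proposition \ref{exponential1} dominates the integrand and justifies $\rho_{u_n}(0) \to \rho_{u_\infty}(0)$. A direct computation using the one-dimensional energy conservation $(g')^2 = 2F(g)$ shows $\rho_{u_\infty}(0) = |\beta|\,\be$, so $|\beta| = \rho(0)/\be$ is a single constant, independent of the subsequence. Every subsequential limit of $|\gamma'(y_n)|$ therefore equals the unique value $L_0 := |\beta|/\sqrt{1-|\beta|^2}$ (understood as $+\infty$ when $|\beta|=1$), proving that $\lim_{y\to\infty}|\gamma'(y)| = L_0$ exists.

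Third, for the signed limit: the only possible subsequential limits of $\gamma'(y)$ are $\pm L_0$, so if both were attained then the continuity of $\gamma' \in C^2$ would produce a subsequence $z_m \to \infty$ with $\gamma'(z_m) \to 0$, contradicting $L_0 > 0$. Hence $\lim_{y\to\infty}\gamma'(y) \in \{+L_0, -L_0\}$, and the constraint $\gamma(y)\to\infty$ from \eqref{limit-gamma} rules out the negative sign by simple integration, giving $\tan\theta_1 = L_0$ with $\theta_1 \in [0,\pi/2]$. The case $y \to -\infty$ is symmetric; the fact that $\gamma(y)\to\infty$ in this limit as well forces $\tan\theta_2 \le 0$, i.e.\ $\theta_2 \in [-\pi/2, 0]$. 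The main obstacle is the passage to the one-dimensional limit: invoking the 2D rigidity of stable Allen--Cahn solutions, and verifying the dominated convergence needed to transfer the Hamiltonian identity to $u_\infty$ via the exponential decay of Proposition \ref{exponential1} along level-set tubes of controlled asymptotic slope.
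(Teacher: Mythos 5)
Your first step (translation along the level set, stability inherited from $u_x>0$ on $\{x>0\}$, and the two--dimensional De Giorgi rigidity to get a one--dimensional limit profile) is exactly the paper's argument, including the treatment of the degenerate case $(u_\infty)_x\equiv 0$ via the normalized quotient. The gap is in your second step. The quantity $\rho$ of \eqref{rho} is an integral over an entire vertical line $\{x=\gamma(y_n)\}$, and that line crosses the $0$-level set $\Gamma$ \emph{twice}: once on the upper branch near $y=y_n$ and once on the lower branch (recall $\gamma(y)\to\infty$ as $y\to-\infty$ as well, by \eqref{limit-gamma}). In the translated frame the lower transition layer sits near $y\approx -2y_n\to-\infty$, so its contribution to $\rho_{u_n}(0)$ does not vanish but simply escapes to infinity; there is no integrable dominating function uniform in $n$, and $\rho_{u_n}(0)\not\to\rho_{u_\infty}(0)$. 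The correct bookkeeping, recorded in \eqref{angle-relation}, is $\rho(0)=\be(\sin\theta_1-\sin\theta_2)$, i.e.\ the sum of two layer contributions, not $\be\,|\beta|$ for the single upper layer. Consequently your identity $|\beta|=\rho(0)/\be$ is false in general, and since the lower-branch contribution is not yet known to be subsequence-independent (that is precisely the statement being proved for $y\to-\infty$), the argument cannot be patched by just subtracting a constant.

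The paper's fix is to replace the two-sided vertical integral by the one-sided horizontal flux $h(y)=\int_0^\infty u_xu_y\,dx$: the horizontal ray at height $y=y_m\to+\infty$ meets $\Gamma$ only once (at $x=\gamma(y_m)$, since $\Gamma$ is a graph over $y$), so the localization estimate \eqref{estiamte-h} together with \eqref{lim-u} gives $h(y_m)\to-\be\sin\theta$ along any sequence with $\gamma'(y_m)\to\tan\theta$; the identity \eqref{int-ur2} shows $\lim_{y\to\infty}h(y)$ exists, and injectivity of $\sin$ on $[-\pi/2,\pi/2]$ then forces all subsequential limits of $\gamma'$ to coincide. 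Note this also removes the sign ambiguity that your step three has to handle separately (your intermediate-value argument there is fine, but it only becomes relevant once a subsequence-independent value is available, which your step two does not supply).
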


\begin{proof}

For any sequence $\{y_m\}$  and constant $\theta \in [-\pi/2, \pi/2]
$ with  $|y_m| \to \infty$ and
$$
\lim_{m \to \infty} \gamma'(y_m)= \tan{\theta},
$$
we define
$$
u^m(x, y):= u(x+\gamma(y_m), y+y_m), \quad x  \ge -\gamma(y_m), \,
\, y \in \Er.
$$
Then $u^m$ converges to $u^*$ in $C^3_{loc} (\Er^2)$  after taking a
subsequence if necessary, where $u^*$ is a solution of
\eqref{allencahn-general} with $\frac{\partial {u^*}}{\partial x}(x,
y) \ge 0, \quad (x, y) \in \Er^2 $.  By the Harnack inequality, we
know that  either $ \frac{\partial {u^*}}{\partial x}(x, y) \equiv
0$, or
 $\frac{\partial {u^*}}{\partial x}(x, y) > 0, \quad (x, y) \in
\Er^2$.   In the first case, we define
$$
\sigma^m (x, y)= \frac{ u^m_{x}(x, y)}{ u^m_{x}(0, 0)}>0, \quad
\forall x \ge -\gamma(y_m), \,\, y \in \Er.
$$
By the Harnack inequality and the gradient estimate for elliptic
equations, we know that $\sigma^m (x, y)$ converges  along  a
subsequence to $\sigma^*(x, y)>0$  in  $C^{2}_{loc}(\Er^2)$   as $m$
goes to infinity. Furthermore,  $\sigma^*(x,y)$ satisfies the
linearized equation of Allen-Cahn equation at $u^*$
\begin{equation}\label{linear-ac2}
\sigma_{xx}+\sigma_{yy}-F''(u^*)\sigma=0, \quad (x,y) \in \Er^2.
\end{equation}

Hence $u^*$ is stable in both cases. By the De Giorgi conjecture for
$n=2$ (\cite{gg}), we know that $ u^*$ depends only on one
direction.   Since $u^*(0, 0)=0 $,  we conclude
\begin{equation}\label{lim-u}
u^*(x, y)=g\bigl(x \cos{\theta}-y\sin{\theta} \bigr), \quad \forall
(x, y) \in \Er^2.
\end{equation}
Note that  straightforward computations  can lead to
\begin{equation}\label{limit-rho}
\rho^*(\theta):=\int_{\Er} [ F \bigl(u^*(x, y)\bigr) +\frac{1}{2}
(u^*_y)^2(x, y)-\frac{1}{2} (u^*_{x})^2 (x, y)]dy= \be \sin{\theta}.
\end{equation}
(See, e.g., \cite{gui1}.)

Next we  shall show  the first limit in \eqref{lim-slope}.

Let
\begin{equation}\label{lim-sup}
\limsup_{ y \to \infty} \gamma'(y)=\tan{\theta_1}
\end{equation}
for some $\theta_1 \in [0, \pi/2]$.

 If
$\liminf_{ y \to \infty} \gamma'(y)=\tan{\theta_0}< \tan{\theta_1}$
for some $\theta_0 \in [-\pi/2, \theta_1)$,  then, for any fixed
$\theta \in (\theta_0, \theta_1)$  there exists a sequence $\{y_m\}$
with $\lim_{ m \to \infty} \gamma'(y_m)=\tan{\theta}$ and $y_m \to
\infty$ as $ m \to \infty$.

For any fixed $R>0$, by the monotone condition
\eqref{axial-monotone} and \eqref{lim-u} we have
\begin{equation}\label{estiamte-h}
\begin{split}
&\lim_{m \to \infty} h(y_m)  =  \lim_{m \to \infty} \int_{
\gamma(y_m)-R}^{\gamma(y_m)+R} u_x u_ydx  \\
& +  O(1) \cdot \lim_{m \to \infty} [G(1)-G \bigl( u(\gamma(y_m)+R,
y_m ) \bigr)] \\
&+ O(1) \cdot \lim_{m \to \infty} [G \bigl( u(\gamma(y_m)-R, y_m
\bigr)-G
\bigl( u(0, y_m) \bigr)]\\
&=-\sin{\theta} [G(\bigl( g(R \cos{\theta} )\bigr) -G(\bigl(
g(-R\cos{\theta} )\bigr)]+  O(1) [ G \bigl( g(-R) \bigr)]
\end{split}
\end{equation}
  where $G$ is defined in \eqref{G} and $ O(1)$ is with respect to  $R \to \infty$.  Letting
$R$ go to infinity, we obtain
$$
\lim_{m \to \infty} h(y_m)= -\be \sin{\theta}.
$$
By \eqref{int-ur2}, we know that $\lim_{a \to \infty} h(a)   $
exists and hence
$$
\lim_{y \to \infty} h(y)= -\be \sin{\theta}.
$$
This leads to
$$
\lim_{y \to \infty} \gamma'(y)= \tan{\theta}
$$
which contradicts \eqref{lim-sup}.  Therefore the first limit in
\eqref{lim-slope} is proven.

Similarly, we can show the second limit in \eqref{lim-slope}.

\end{proof}

Furthermore,   by \eqref{limit-h1}   we have
\begin{equation}\label{angle-relation}
\be( \sin{\theta_1}-\sin{\theta_2} ) =\rho(0) =\lim_{R\to \infty}
\rho( R )
\end{equation}

We note that in Case (ii), the above discussion can be modified with
$\theta_2=-\pi/2$ and
 $ y \to -\infty$  being  replaced by $ y \to K$.
Similar modifications can be done for Case (iii) with $\theta_1
=\pi/2$.

\subsection{The limits of slopes differ by a sign}

We shall  show that the limits of the slopes of the level set differ
only by a sign, i.e.,  $\theta_1=-\theta_2 \in (0, \pi/2)$.

\begin{lemma}\label{angle-lemma}
There holds
\begin{equation}\label{angle}
  \theta_1=-\theta_2.
\end{equation}
\end{lemma}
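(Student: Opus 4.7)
My plan is to supplement the Hamiltonian identity $\rho(x) \equiv \rho(0)$ of Lemma \ref{axial-hamiltonian} with an ``orthogonal'' conservation law in the $y$-direction. Specifically, I would define
$$\Psi(y) := \int_0^\infty \left[\tfrac{1}{2} u_y^2 - \tfrac{1}{2} u_x^2 - F(u)\right]\, dx$$
and first show $\Psi$ is constant. Differentiating under the integral and using \eqref{2d-ac} to replace $u_{yy} - F'(u)$ by $-u_{xx}$, one finds $\Psi'(y) = -\int_0^\infty \partial_x(u_x u_y)\,dx = -[u_x u_y]_{x=0}^{x=\infty}$, and both boundary terms vanish: at $x=0$ by the Neumann condition $u_x(0,y)=0$, and as $x \to \infty$ because in Case (i), $u^\tau \to u^+ \equiv 1$ in $C^3_{loc}$ forces $u_x \to 0$ while $u_y$ stays bounded.

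Next I would identify the two limits of $\Psi$ at infinity, aiming to prove $\lim_{y\to+\infty}\Psi(y) = -\be\cos\theta_1$ and $\lim_{y\to-\infty}\Psi(y) = -\be\cos\theta_2$. The integrand of $\Psi$ decays like $e^{-2\nu\, d(x, y; \Gamma)}$ by Proposition \ref{exponential1} (together with $F(u) = O((u^2-1)^2)$ near $\pm 1$), so for large $|y|$ the mass of $\Psi(y)$ concentrates near the unique zero crossing $x = \gamma(y)$ on the half-line $\Er_+ \times \{y\}$. Around this crossing the translation argument from Lemma \ref{limit-gamma-der} shows that, along any subsequence $y_m \to +\infty$, $u(s+\gamma(y_m), \eta+y_m)$ converges in $C^2_{loc}(\Er^2)$ to the rotated one-dimensional transition $g(s\cos\theta_1 - \eta\sin\theta_1)$. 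Substituting this profile and using the equipartition $\tfrac{1}{2}(g')^2 = F(g)$ turns the integrand on the slice $\eta=0$ into $-2F(g)\cos^2\theta_1$; after the change of variable $t = s\cos\theta_1$ and the identity $\int F(g(t))\,dt = \be/2$, the inner integral tends to $-\be\cos\theta_1$. An identical computation at $y\to-\infty$ near the lower crossing gives $-\be\cos\theta_2$.

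Equating the two limits via the constancy of $\Psi$ forces $\cos\theta_1 = \cos\theta_2$, and hence $\theta_1 = -\theta_2$ under the sign constraints $\theta_1 \in [0, \pi/2]$ and $\theta_2 \in [-\pi/2, 0]$. The main technical obstacle is the uniform-in-$y$ control of the tails $|x-\gamma(y)| \ge R$ in $\Psi(y)$: since the interval $[0, \gamma(y)]$ has length tending to infinity, the pointwise decay must be upgraded to a uniform integral bound. I would obtain this by noting that along the asymptotic ray of $\Gamma$ the distance from $(x, y_m)$ to $\Gamma$ is comparable to $|x\cos\theta_1 - y_m\sin\theta_1|$, which produces a tail bound of order $Ce^{-2\nu R\cos\theta_1}$ that vanishes as $R \to \infty$ uniformly in $y_m$ (with a parallel argument at $-\infty$), thereby legitimizing the limit computation above.
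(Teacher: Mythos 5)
Your strategy is essentially the paper's own: supplement the vertical Hamiltonian identity of Lemma \ref{axial-hamiltonian} with a second, transverse conservation law and equate its two limits at $\pm\infty$ to conclude $\cos\theta_1=\cos\theta_2$, hence $\theta_1=-\theta_2$ from the sign constraints. Your computation of the limiting values via the blow-up profile \eqref{lim-u} and the equipartition $\tfrac{1}{2}(g')^2=F(g)$ is correct ($-2F(g)\cos^2\theta_i$ integrates to $-\be\cos\theta_i$ after the change of variable), and restricting the integral to the half-line $x>0$ so that the Neumann condition kills the boundary term at $x=0$ is a genuine simplification: each horizontal line then meets only one branch of $\Gamma$, so you get a single term per limit instead of the two terms in \eqref{ham-const}, and you avoid the paper's case analysis over the ranges of the tilt angle.

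There is, however, a gap in the degenerate cases $\theta_1=\pi/2$ or $\theta_2=-\pi/2$, which at this point in the argument have \emph{not} yet been excluded (the paper rules out $\theta_1=\pi/2$ only \emph{after} Lemma \ref{angle-lemma}, using \eqref{angle-relation} and \eqref{ham-const2}, i.e.\ using the conclusion of the lemma itself). If, say, $\theta_1=\pi/2$, the upper branch of $\Gamma$ is asymptotically tangent to the horizontal lines along which you integrate: the distance from $(x,y_m)$ to $\Gamma$ no longer grows linearly in $x$, your tail bound $Ce^{-2\nu R\cos\theta_1}$ degenerates to a constant, and neither the finiteness of $\Psi(y)$ nor the uniform vanishing of the tails (nor, strictly speaking, the differentiation under the integral needed for $\Psi'\equiv 0$) is justified; note that the integrand of $-\Psi$ is nonnegative but only bounded above by $2F(u)$, whose integrability over the half-line is not automatic. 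This is precisely the difficulty the paper's proof is engineered to avoid: it integrates along lines tilted at a \emph{generic} angle $\theta\notin\{\theta_1,-\theta_2\}$, so the integration direction is transverse to every branch of the level set and the exponential decay \eqref{exponential2} holds unconditionally, at the cost of picking up two transition terms per limit. To repair your argument you would either need to dispose of the endpoint angles separately, or simply replace the horizontal direction by a generically tilted one as in the paper.
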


\begin{proof}

Recall that $u$ is an {\it even } solution in $\Er^2$ with respect
to $x$.

Let us choose an angle $\theta \in (0, \pi/2),  \theta \not =
\theta_1, -\theta_2$ and a Cartesian coordinate system $(z_1, z_2)$
such that $z_1$-axis and $y$ axis  form an angle $\theta$. In other
words, we have  $ x=z_1 \sin{\theta}+z_2 \cos{\theta},  y=z_1
\cos{\theta}-z_2 \sin{\theta}$.
 By \eqref{exponential}, we
know that
\begin{equation}\label{exponential2}
|u^2(z_1, z_2)-1|+|\nabla u(z_1, z_2)|+|\nabla^2 u(z_1, _2)| \le C
e^{ -\nu_1 |z_1|}, \quad \forall z_1 \in \Er
\end{equation}
for some positive constants $\nu_1>0$ and  $C$.

Therefore, there holds a Hamiltonian identity  like
\eqref{eq:axial-ham} with respect to $z$. Namely,
\begin{equation}\label{ham-z}
\bar{\rho}(\theta, z_2):= \int_{\Er} [ F \bigl(u(z_1, z_2)\bigr)
+\frac{1}{2} u_{z_1}^2(z_1, z_2)-\frac{1}{2} u_{z_2}^2 (z_1,
z_2)]dz_1 =\bar{\rho}(\theta,0)< \infty.
\end{equation}
(The proof is  similar to  \eqref{eq:axial-ham}; See also Theorem
1.1 in \cite{gui1}.)     When $ \theta>\theta_1, \theta>-\theta_2$,
a straightforward computation can lead to
\begin{equation}\label{ham-const}
\left\{
\begin{split} &\lim_{z_2 \to \infty} \bar{\rho}(\theta, z_2)=
\be
\bigl(\sin(\theta-\theta_1)+ \sin(\theta+ \theta_1) \bigr);\\
&\lim_{z_2 \to -\infty} \bar{\rho}(\theta, z_2)= \be
\bigl(\sin(\theta-\theta_2)+ \sin(\theta+ \theta_2) \bigr)
\end{split}
\right.
\end{equation}
Then we have
$$
\sin(\theta-\theta_1)+ \sin(\theta+ \theta_1)=\sin(\theta-\theta_2)+
\sin(\theta+ \theta_2)
$$
and hence  $\theta_1=-\theta_2$.

The same conclusion can be reached if  $\theta$ is in other range
compared to $\theta_1,  -\theta_2$, with only slight difference in
the expression in \eqref{ham-const}.  The details is left to the
reader. See also (2.12) in \cite{gui1}.

\end{proof}

Since $\rho(0)>0$,  an easy consequence of Lemma \ref{angle-lemma}
and \eqref{angle-relation} is $\theta_1=-\theta_2>0$, .  Next we
shall show $ \theta_1 < \pi/2$.

If $\theta_1=\pi/2$, we choose $\theta \in (0, \pi/2)$ and carry out
the same computation as \eqref{ham-const} to obtain
\begin{equation}\label{ham-const2}
\bar{\rho}(\theta, 0)=\lim_{z_2 \to \infty} \bar{\rho}(\theta,z_2)=2
\be \sin(\pi/2-\theta).
\end{equation}
Letting $\theta \to \pi/2$,  we obtain
$$
\lim_{\theta \to \pi/2} \bar{\rho}(\theta, 0)=0.
$$

On the other hand, by \eqref{gradient} we have
$$
\lim_{\theta \to \pi/2} \bar{\rho}(\theta, 0) \ge \int_{\Er}
\frac{1}{2} u_{x}^2(x,0)dx>0.
$$
This is a contradiction, and hence proves $ \theta_1 < \pi/2$.

\subsection{The level set is asymptotically a straight line}
Below we quote a lemma from \cite{gui1} on the asymptotical behavior
of the level set.

 \begin{lemma}\label{straightline}
Suppose that  $u(y_1, y_2)$ is a solution of \eqref{2d-ac} in a cone
$ \calc:=\{ y \in  \Er^2 :  |y_1 | \le  y_2 \tan{\alpha_0} , \,
y_2\ge M>0\}$ for some $0<\alpha_0<\pi/2$. The $0$-level set of $u$
in $\calc$ is given by the graph of a function $ y_1=k(y_2)$. Assume
\begin{equation}\label{zero-slope}
\lim_{y_2 \to \infty} k'(y_2) =0.
\end{equation}
 Then there is a finite number $A$ such that
\begin{equation}
\lim_{y_2 \to \infty} k(y_2) =A_1.\end{equation}
\end{lemma}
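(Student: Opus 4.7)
My plan is to combine a blow-down in the $y_2$-direction with the rigidity of one-dimensional Allen--Cahn profiles and a tilted Hamiltonian identity in order to force the graph $y_1 = k(y_2)$ to converge to a vertical line.

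First I would extract limits by vertical translation. For any sequence $T_n \to \infty$, set $u_n(y_1, y_2) := u(y_1, y_2 + T_n)$. Because the cone $\calc$ widens, these are defined on regions that eventually contain any compact subset of $\Er^2$. The gradient bound \eqref{gradient} together with interior elliptic estimates gives precompactness in $C^{3,\beta}_{loc}(\Er^2)$, and a subsequence converges to a bounded entire solution $u^\star$ of \eqref{allencahn-general}. The zero-level set of $u_n$ on a compact set $K$ is $\{y_1 = k(y_2 + T_n)\}$; the hypothesis $k'(y_2) \to 0$ yields $k(y_2 + T_n) - k(T_n) \to 0$ uniformly on $K$. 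Hence, provided $k(T_n) \to A$ along the subsequence, the zero-level set of $u^\star$ is the affine line $\{y_1 = A\}$. Either the two-dimensional De Giorgi conjecture (\cite{gg}) applied to $u^\star$, or the Hamiltonian identity applied to $u^\star$ in the tangential direction, then forces $u^\star(y_1, y_2) = g(y_1 - A)$.

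Next I would check that $k$ is bounded so every subsequential limit $A$ is finite. If $|k(y_2)| \to \infty$ along some sequence, the exponential decay \eqref{exponential} together with Modica's monotonicity formula (Proposition \ref{modica2}) would produce discs centered on $\Gamma$ deep inside $\calc$ whose normalized energy $\cale_R/R$ jumps by more than the one-dimensional layer energy $\be$, violating monotonicity. Hence $A_1 := \liminf_{y_2\to\infty} k(y_2)$ and $A_2 := \limsup_{y_2\to\infty} k(y_2)$ are both finite real numbers.

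The main obstacle is to show $A_1 = A_2$. Assuming instead $A_1 < A_2$, the intermediate value theorem applied to the continuous $k$ produces, for every $c \in [A_1, A_2]$, a sequence $s_n^{(c)} \to \infty$ with $k(s_n^{(c)}) = c$, and the first step then yields the subsequential limit $g(y_1 - c)$. To rule this out I would evaluate the tilted Hamiltonian identity \eqref{ham-z} along cross-sections oblique to the $y_2$-axis: the conserved value $\bar\rho(\theta, z_2)$ at $z_2 \to +\infty$, computed exactly as in \eqref{ham-const}, depends explicitly on the translation parameter of the limiting 1D profile through the oblique angle, so different values of $c$ would yield incompatible conserved values, a contradiction. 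Equivalently, a sliding argument that pushes the comparison $g(y_1 - A_1)$ horizontally until it touches $u$ from below inside the cone, combined with the strong maximum principle, yields the same conclusion. Either realization of this rigidity is the delicate step; once it is in hand, $A_1 = A_2$, and denoting the common value by $A_1$ the lemma follows.
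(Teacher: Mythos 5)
Your compactness step and the identification of blow-down limits as translates of $g$ are fine, but the two load-bearing steps both have genuine gaps. The decisive one is your mechanism for $A_1=A_2$: the tilted Hamiltonian identity \eqref{ham-z} is blind to the very offset you need to pin down. If the limit along a sequence of cross-sections is a one-dimensional profile $g$ of the signed distance to the line $\{y_1=c\}$, then $\lim_{z_2\to\infty}\bar\rho(\theta,z_2)$ is computed, after the change of variables to the argument of $g$, as a fixed multiple of $\be$ depending only on the angle $\theta$ (exactly as in \eqref{limit-rho} and \eqref{ham-const}); the translation parameter $c$ drops out of the integral. So distinct subsequential offsets $c$ produce the \emph{same} conserved value and no contradiction. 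The quantity that does detect the offset is the first-moment identity $E$ of \eqref{rho-x}, which you do not invoke; and using it in a cone would require controlling the lateral boundary terms (as in \eqref{modify}) and justifying the limit of a $y$-weighted integral, which needs uniform exponential decay off $\Gamma$, not just $C^3_{loc}$ convergence. Your fallback "sliding $g(y_1-A_1)$ until it touches $u$ from below" is only asserted: in an unbounded cone with no information on $u$ along the lateral boundary, producing an interior touching point is precisely the hard part, not a routine maximum-principle step.

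The boundedness step is also unsubstantiated. A single layer along $y_1=k(y_2)$ with $k'\to 0$ contributes normalized energy $\cale_R(u)/R\to\be$ (per branch) regardless of whether $k$ is bounded, so Proposition \ref{modica2} produces no "jump" to exploit if $|k(y_2)|\to\infty$ slowly, say like $\log y_2$; note also that your first step already presupposes $k(T_n)$ bounded, so the logic is circular there. The paper's route (following \cite{gui1}) is quantitative and sidesteps both issues: one shows the sectional energy $\int[\tfrac12 u_{y_1}^2+F(u)]\,dy_1$ is exponentially close to $\be$ as $y_2\to\infty$, projects $u(\cdot,y_2)$ optimally onto the family $g(\cdot-l(y_2))$ with exponentially small $L^2$ error, and then shows $l'(y_2)$ is integrable, so $l$ and hence $k$ converge (boundedness comes for free). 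To repair your argument you would need either to reproduce that quantitative scheme or to replace the $\bar\rho$ computation by a correctly justified first-moment identity.
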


 The lemma can be shown in three steps.  First, we show that  an energy of  $u$ on
 a line segment $[-y_2\tan{\alpha}, y_2\tan{\alpha_0}], \quad \alpha \in (0, \alpha_0) $ is
 exponentially close to $\be$ as $y_2$ tends to $\infty$.  Second, we construct an
 optimal approximation of $u(\cdot, y_2)$  by  a shift of the  one dimensional solution
 $g \bigl(y_1-l(y_2) \bigr)$,
 and show that the error is exponentially small in $L^2$ norm as $y_2$
  goes to infinity. Finally, we deduce that the shift $l(y_2)$ has a finite limit,  and then
   conclude that
 $k(y_2)$ has a finite limit. For the details of the proof, the reader is referred to \cite{gui1}.

 Now we choose the coordinate system $ (y_1, y_2)$ so that
 $y_2$-axis  form an angle $\theta_1$  with $y$-axis, and
 $\alpha_0 <\min\{\pi/2-\theta_1, \theta_1\}$.  Using Lemma \ref{limit-gamma-der} and Lemma
 \ref{straightline}, we conclude that
 \begin{equation}\label{line+}
 \gamma(y)=(\tan{\theta_1}) y+
 A_2+o(1), \quad \text{ as }\,\, y \to \infty.
 \end{equation}
  Similarly,  we can show
  \begin{equation}\label{line-}
 \gamma(y)=-(\tan{\theta_1}) y+
 A_3+o(1), \quad \text{ as }\,\, y \to -\infty.
 \end{equation}
 Then,  for $Y_0$ large enough, the  inverse functions of $\gamma(y)$ for  $ y>Y_0$ and $y<Y_0$
 exist,  and may  be written as $y=k_1(x),  y=k_2(x) $ respectively.
  Moreover,
\begin{equation}\label{limit-cons}
k_1(x)=\kappa x+ B_1 +o(1), \quad k_2(x)=-\kappa x+ B_2 +o(1)
\end{equation}
as $ x \to \infty$, where $\kappa= \cot{\theta_1}$ is a positive
(finite) constant, and $B_1, B_2$ are constants.
%It follows from \eqref{lim-u} that
%\begin{equation}

\subsection{The moving plane method}
Next we shall use the moving plane method to show the even symmetry
of $u$ with respect to $y$.   Due to the fact that the asymptotical
behavior of $u$ is not homogeneous near infinity, in particular,
there is a transition layer along  the $0$-level set, the classic
moving plane method has to be carefully modified.   Indeed, we have
to use the exact asymptotical formulas of the 0-level sets $y
=k_i(x), i=1, 2$ near infinity as well the asymptotical behavior of
$u$ along these curves.

For this purpose, we define  $ u_\lambda(x, y):= u(x, 2\lambda-y)$
and $w_\lambda:=u_{\lambda} -u$ in $ D_{\lambda}:=\{ (x, y) : x \ge
0,\,\,  y \ge \lambda \} $.

\begin{lemma}
When $ \lambda $ is sufficiently large,   there holds $w_\lambda
>0$ in $ D_{\lambda}$.
\end{lemma}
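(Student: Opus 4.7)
My plan is to run the moving plane argument starting from $\lambda = +\infty$, using as key inputs the asymptotic linearity \eqref{line+}--\eqref{limit-cons} of the level set $\Gamma$ and the exponential decay of Proposition \ref{exponential1}. I would first locate the zero level sets of $u$ and of $u_\lambda$ inside $D_\lambda$. Setting $x_\lambda := \gamma(\lambda)$ and $P_\lambda := (x_\lambda, \lambda)$, the level set of $u$ in $D_\lambda$ is the single ascending arc $\Gamma_u = \{y = k_1(x): x \ge x_\lambda\}$, entering $D_\lambda$ only at the boundary corner $P_\lambda$. Reflecting across $\{y = \lambda\}$, the level set of $u_\lambda$ in $D_\lambda$ is the union of the descending arc $\{y = 2\lambda - k_1(x): 0 \le x \le x_\lambda\}$ (also terminating at $P_\lambda$) together with the remote arc $\{y = 2\lambda - k_2(x): x \ge 0\}$, which lies above the line $y = 2\lambda + \kappa x - B_2$. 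The decisive geometric observation, forced by the boundedness $k_1(x) + k_2(x) = B_1 + B_2 + o(1)$, is that for $\lambda$ large these two level sets meet in $\overline{D_\lambda}$ only at the single corner $P_\lambda$.

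Next I would perform a sign analysis of $w_\lambda$ outside a tubular neighborhood $\calu$ of $\Gamma_u$ and the level set of $u_\lambda$. By Proposition \ref{exponential1}, in each open component of $D_\lambda \setminus \calu$ both $u$ and $u_\lambda$ are exponentially close to $\pm 1$. A direct case check, using the geometry just described, rules out the only sign combination $u \approx +1,\ u_\lambda \approx -1$ that would produce $w_\lambda < 0$: the positive region $\{u > 0\} \cap D_\lambda$ is confined to $\{x > x_\lambda,\ \lambda \le y < k_1(x)\}$, inside which the reflected point $(x, 2\lambda - y)$ satisfies $k_2(x) < 2\lambda - y < k_1(x)$, so $u_\lambda > 0$ as well. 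Writing the linear equation
\begin{equation*}
\Delta w_\lambda - c_\lambda w_\lambda = 0, \qquad c_\lambda(x, y) := \int_0^1 F''\bigl(u + t w_\lambda\bigr)\,dt,
\end{equation*}
with $w_\lambda = 0$ on $\{y = \lambda\}$, $(w_\lambda)_x = 0$ on $\{x = 0\}$ (from the evenness of $u$ in $x$), and $\liminf w_\lambda \ge 0$ at infinity, one has $c_\lambda \ge \mu_0 > 0$ on $D_\lambda \setminus \calu$ (since $F''(\pm 1) > 0$), so the strong maximum principle forbids a negative interior infimum of $w_\lambda$ there.

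The main obstacle is ruling out a negative infimum of $w_\lambda$ inside the transition tube $\calu$ near the corner $P_\lambda$, where $u$ and $u_\lambda$ are both close to $0$ and $c_\lambda$ may be negative. I expect to handle this by a local argument at $P_\lambda$: using that the two transition tubes approach $P_\lambda$ from $D_\lambda$ with definite opposite slopes $\pm \kappa + o(1)$, one applies a Hopf-type boundary point lemma to $w_\lambda$ at $P_\lambda$ (leveraging $w_\lambda = 0$ on $\{y = \lambda\}$), supplemented by a super-solution barrier built from the one-dimensional transition profile $g$ aligned with the tubes. Once $w_\lambda \ge 0$ on $D_\lambda$ is established, strict positivity $w_\lambda > 0$ in the interior follows at once from the strong maximum principle for the above linear equation.
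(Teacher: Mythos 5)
Your overall strategy is the same as the paper's: split $D_\lambda$ according to position relative to the transition layers of $u$ and of the reflection $u_\lambda$, use the convexity of $F$ near $\pm 1$ together with a translation/compactness argument to exclude a negative infimum away from the layers, and isolate the region near $P_\lambda=(\gamma(\lambda),\lambda)$, where the layers meet the reflection line, as the remaining difficulty. Your geometric bookkeeping (the reflection of any point of $\{u>0\}\cap D_\lambda$ lands between the two branches, so the dangerous combination $u\approx 1$, $u_\lambda\approx -1$ never occurs; the second layer of $u_\lambda$, along $y=2\lambda-k_2(x)$, is pushed far away for $\lambda$ large because $k_1+k_2$ is bounded) is exactly the content of \eqref{asym-k} and of the choice $k_2^\lambda\ge k_1+R_2$ in the paper's proof. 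One small imprecision: $c_\lambda\ge\mu_0>0$ is false on components where $u\approx-1$ and $u_\lambda\approx+1$, but there $w_\lambda\approx 2>0$, so only the sign of $c_\lambda$ at points where $w_\lambda<0$ matters; your case analysis already contains this observation.

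The genuine gap is the transition tube near $P_\lambda$. A Hopf boundary-point lemma applied at the single point $P_\lambda$ controls a normal derivative at one boundary point; it cannot by itself give $w_\lambda\ge 0$ throughout a two-dimensional neighborhood in which the zeroth-order coefficient may be negative, and the ``super-solution barrier built from $g$'' is named but never constructed, so as written this step does not close. The paper's mechanism in the corresponding region $D_\lambda^0$ is different: by the last two lines of \eqref{asym-k}, $u(x,y)$ and $u_\lambda(x,y)=u(x,2\lambda-y)$ are each $\delta/2$-close to $-g\bigl(\sin\theta_1(y-\kappa x-B_1)\bigr)$ and $-g\bigl(\sin\theta_1(2\lambda-y-\kappa x-B_1)\bigr)$ respectively, and since $2\lambda-y\le y$ on $D_\lambda$ and $g$ is increasing, the leading profiles are ordered favorably; wherever that ordering is not strict enough to absorb the $O(\delta)$ errors, both values lie in $\{t:|t|\ge 1-\delta\}$ and the maximum-principle argument used for $D_\lambda^{\pm}$ takes over. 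If you prefer your barrier route, you must actually exhibit a positive supersolution $\varphi$ of the linearized equation \eqref{elliptic-w} on the tube (e.g.\ built from $g'$ in the direction transverse to the layer) and apply the maximum principle to the quotient $w_\lambda/\varphi$; that is a legitimate alternative, but it is precisely the step that has to be written out rather than assumed.
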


\begin{proof}

We  first  fix $X_0$ sufficiently large so that $k_1(x), k_2(x) $
are well defined.  By the property of double well potential
\eqref{doublewell},  there exists a sufficiently small constant
$\delta>0$ such that  $ F''(t)>0, t \in [-1,
-1+\delta]\cup[1-\delta, 1]$.  There is also  a sufficiently large
constant $R_1>0$ such that $  -1 <g(s) \le -1+\delta/2, \, \forall s
<-R_1$ and $ 1-\delta/2 \le g(s) < 1, \,  \forall s >R_1$, where $g$
is the one dimensional solution in \eqref{1d-ac}.  By
\eqref{limit-cons} and \eqref{lim-u}, there exist $X_1, R_2$
sufficiently large such that for $ x>X_1$
\begin{equation}\label{asym-k}
\left\{
\begin{split}
&u(x, y) <-1+\delta, \quad \text{if }\,\, y> k_1(x)+R_2,\,  \text{ or } \, y <-k_2(x) -R_2,\\
&u(x, y) >1-\delta, \quad  \text{if }\,\, 0 < y<k_1(x)
-R_2,\, \text{ or }\,   -k_2(x) +R_2<y<0,\\
&|u(x, y)+ g\bigl( y\sin\theta_1 -x\cos\theta_1 -B_1
\sin\theta_1\bigr)|\le \delta/2,\\
&\quad \quad \quad  \quad \text{ if } k_1(x) -R_2 < y< k_1(x) +R_2;\\
&|u(x, y)-g\bigl( y\sin\theta_1+ x\cos\theta_1 -B_2
\sin\theta_1\bigr)|\le \delta/2,\\
&\quad \quad \quad  \quad \text{ if } k_2(x)-R_2 < y< k_2(x)+R_2.
\end{split}
\right.
\end{equation}

When $\lambda> \lambda_1$ is sufficiently large, by
\eqref{limit-cons} we have
$$
k^{\lambda}_2(x): = 2 \lambda -k_2(x) \ge k_1(y)+  R_2, \quad
\forall x \ge X_1.
$$
By Lemma \ref{limit-u},  we can also choose $\lambda_1$ so that
$$
u(x, y) <-1+\delta, \quad 0<x<X_1, \, y>\lambda_1.
$$

We claim that $w_\lambda \ge 0$ in $D_{\lambda}$ for $\lambda>
\lambda_1$, and shall show this claim in the  following three
subsets of $D_{\lambda}$ respectively:
\begin{equation*}
\begin{split}
&D_{\lambda}^+:= \{ (x, y):  0<x<X_1,\, y>\lambda, \text{or }
x>X_1,\, y> k^{\lambda}_2(x)\},\\
&D_{\lambda}^-:= \{ (x, y): x>X_1,\, y< k_1(x)\},\\
&D_{\lambda}^0:= \{ (x, y): x>X_1, \,  k_1(x)< y<k^{\lambda}_2(x)\}.
\end{split}
\end{equation*}

 If the claim  is not true in $D_{\lambda}^+$,  then there exists a sequence of points
$\{(x_m, y_m)\}_{m=1}^\infty \in  D_{\lambda}^+$ such that
$$
\lim_{m \to \infty} w_\lambda (x_m, y_m)=  \lim_{m \to \infty}
\bigl( u_\lambda(x_m, y_m)-u(x_m, y_m) \bigr) =\inf_{D_{\lambda}^+}
w_\lambda(x, y)<0.
$$
It can be seen   from \eqref{asym-k} that $u_{\lambda}(x_m, y_m)<
u(x_m, y_m)< -1+\delta$ when $m $ is large enough.  Then we can use
the standard translating arguments to obtain a contradiction as
follows. Define $w^m_\lambda(x, y):= w_\lambda( x+x_m, y+y_m)$ in
$D_{\lambda}^+-(x_m, y_m)$. Then $ w^{m}_\lambda $ converges to
$w^{\infty}_\lambda (x, y)$ in $C^3_{loc} (D^{\infty})$ for some
piecewise Lipschitz  domain $ D^{\infty}$ in $\Er^2$ which contains
a small ball centered at the origin. Furthermore,
$w^{\infty}_\lambda$ attains its negative minimum at the origin and
satisfies a linearized equation
\begin{equation}\label{elliptic-w}
w_{xx}+ w_{yy}-F''(\xi(x, y)) w=0, \quad (x, y) \in D^\infty
\end{equation}
where $\xi(x, y)=su(x, y)+(1-s)u_{\lambda}(x, y)$ for some $s(x,y)
\in (0, 1)$  and $F''(\xi(0,0))>0$. This is a contradiction, which
leads to the claim in $D_{\lambda}^+$.

Similarly, the claim can be shown in $D_{\lambda}^-$ by the strong
maximum principle, due to the fact that $ u_{\lambda}>1-\delta $ in
$D_{\lambda}^-$ as in \eqref{asym-k}.  The claim is also true in
$D_{\lambda}^0$ when $\lambda$ is large enough,  due to the last two
estimates in \eqref{asym-k}.

Then, using the  strong maximum principle (or the Harnack
inequality) to an elliptic equation satisfied by $w_\lambda$ which
is similar to  \eqref{elliptic-w}, the lemma is proven.

\end{proof}

Now we define
 $$ \Lambda=\inf\{ \lambda :   u_\lambda(x, y) > u(x, y), (x, y)
\in D_\lambda\}.
$$

\begin{lemma} There holds
$$  \Lambda =(B_1+B_2)/2 $$
where $B_1, B_2$ are as in \eqref{limit-cons}.
\end{lemma}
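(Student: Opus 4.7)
I plan to prove the lemma in two inequalities. The geometric meaning of the target value is transparent: reflecting the lower branch $y=k_2(x)$ of the zero set across the line $y=\lambda$ gives the curve $y=2\lambda-k_2(x)=\kappa x+(2\lambda-B_2)+o(1)$, which asymptotically coincides with the upper branch $y=k_1(x)=\kappa x+B_1+o(1)$ precisely when $2\lambda=B_1+B_2$. For $\lambda$ below this critical value the reflected curve lies strictly below $k_1$ at infinity, forcing $w_\lambda$ to change sign; for $\lambda$ strictly above it the two transition layers of $u$ and $u_\lambda$ are asymptotically separated, so the three-region maximum principle argument of the previous lemma localises.

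For the lower bound $\Lambda\ge(B_1+B_2)/2$, I fix any $\lambda<(B_1+B_2)/2$ and choose $x_0$ large enough that both $k_1(x_0)-(2\lambda-k_2(x_0))=(B_1+B_2-2\lambda)+o(1)>R_2$ and $2\lambda-k_2(x_0)>\lambda$ hold (the latter is possible because $k_2(x_0)\to-\infty$). The point $P=(x_0,\,2\lambda-k_2(x_0))$ lies in $D_\lambda$, and by construction $u_\lambda(P)=u(x_0,k_2(x_0))=0$. Since $P$ lies strictly below $k_1(x_0)-R_2$ in the $y$-coordinate, the second alternative in \eqref{asym-k} gives $u(P)>1-\delta>0$. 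Hence $w_\lambda(P)<0$, so $\lambda$ is not in the admissible set, which yields $\Lambda\ge(B_1+B_2)/2$.

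For the upper bound $\Lambda\le(B_1+B_2)/2$, I argue by contradiction and assume $\Lambda>(B_1+B_2)/2$. By continuity and the definition of $\Lambda$, $w_\Lambda\ge 0$ on $D_\Lambda$, and $w_\Lambda$ satisfies an elliptic equation of the form \eqref{elliptic-w} with bounded coefficient. The strong maximum principle yields either $w_\Lambda\equiv 0$ or $w_\Lambda>0$ in the interior $D_\Lambda\cap\{y>\Lambda\}$. In the first case $u$ is reflectively symmetric about $y=\Lambda$, so the two branches of the zero set satisfy $k_1(x)+k_2(x)\equiv 2\Lambda$, giving $B_1+B_2=2\Lambda$ and contradicting $\Lambda>(B_1+B_2)/2$. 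In the second case I would show that $\Lambda$ can be strictly decreased: on any compact subset $K\subset D_\Lambda\cap\{y>\Lambda\}$ there is $\mu>0$ with $w_\Lambda\ge\mu$, so by continuity $w_{\Lambda-\varepsilon}\ge\mu/2$ on $K$ for small $\varepsilon$; outside $K$ the decomposition $D_{\Lambda-\varepsilon}^{\pm},\,D_{\Lambda-\varepsilon}^{0}$ from the previous lemma still applies, since strictness $\Lambda-\varepsilon>(B_1+B_2)/2$ keeps the reflected curve $2(\Lambda-\varepsilon)-k_2(x)$ above $k_1(x)$ by a positive asymptotic gap, so that in the $\pm$ regions one has $F''(\xi)>0$ and the translation argument of the previous lemma excludes a negative infimum, while in the transition region the last two estimates in \eqref{asym-k} directly force $w_{\Lambda-\varepsilon}\ge 0$.

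The main obstacle is the narrow boundary strip $\{\Lambda-\varepsilon\le y\le\Lambda\}$, on which $w_\Lambda$ provides no positive information since it vanishes on $\{y=\Lambda\}$. I expect to handle this by applying Hopf's lemma to $w_\Lambda$ at the boundary $y=\Lambda$, which yields $\partial_y w_\Lambda(x,\Lambda)=-2u_y(x,\Lambda)>0$ and hence $u_y(x,\Lambda)<0$ uniformly on bounded $x$-intervals, combined with the exponential decay \eqref{exponential} to control $u_y$ as $x\to\infty$; together these make $u$ strictly decreasing in $y$ across the strip for $\varepsilon$ small, so that $w_{\Lambda-\varepsilon}>0$ there. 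Making this last step uniform in $x$ is the delicate point and the principal technical difficulty of the proof.
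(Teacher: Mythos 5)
Your overall strategy is the paper's: the lower bound $\Lambda\ge(B_1+B_2)/2$ is treated as an easy consequence of \eqref{limit-cons}, and the heart of the matter is ruling out $\Lambda>(B_1+B_2)/2$ by a moving-plane contradiction ending in Hopf's lemma on $\{y=\Lambda\}$. Two points deserve comment. First, your lower-bound argument as written does not quite give the sharp constant: placing $P=(x_0,\,2\lambda-k_2(x_0))$ inside the region where \eqref{asym-k} guarantees $u>1-\delta$ requires $2\lambda-k_2(x_0)<k_1(x_0)-R_2$, i.e. $B_1+B_2-2\lambda>R_2+o(1)$, and this cannot be arranged by taking $x_0$ large when $0<B_1+B_2-2\lambda\le R_2$. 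You only get $\Lambda\ge(B_1+B_2-R_2)/2$ this way. To close the gap you must either use the transition-layer asymptotics in the third line of \eqref{asym-k} (for $y$ within $R_2$ of $k_1(x)$ one has $u\approx g$ of the signed distance, which is still positive strictly below the branch), or argue directly that $u_\lambda>u$ in $D_\lambda$ forces the reflected nodal branch $2\lambda-k_1(x)$ to lie above $k_2(x)$, whence $B_1+B_2\le 2\lambda$ in the limit.

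Second, the "principal technical difficulty" you flag — uniformity in $x$ of the positivity of $w_{\Lambda-\varepsilon}$ on the strip $\{\Lambda-\varepsilon\le y\le\Lambda\}$ — is exactly what the paper's packaging is designed to avoid, and it is resolvable within your setup too. For $x$ large the whole strip lies strictly between the two nodal branches (since $k_1(x)\to+\infty$ and $k_2(x)\to-\infty$), so both $u$ and $u_{\Lambda-\varepsilon}$ exceed $1-\delta$ there and the convexity of $F$ near $\pm1$ handles it; the strip is only genuinely delicate over a bounded $x$-interval, where your Hopf-plus-continuity argument is fine. The paper instead takes $\lambda_m\nearrow\Lambda$ with $\inf_{D_{\lambda_m}}w_{\lambda_m}<0$, observes that at an interior negative minimum one must have $F''(\xi)\le0$, which confines the minimizing points $(x_m,y_m)$ to an $R_2$-neighborhood of the branch $k_1$, and then uses the asymptotic separation coming from $\Lambda>(B_1+B_2)/2$ to bound $x_m$; passing to the limit yields a point $(x_0,y_0)$ on $\{y=\Lambda\}$ with $w_\Lambda(x_0,y_0)=0$ and $\nabla w_\Lambda(x_0,y_0)=0$, contradicting Hopf in a single stroke. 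I would recommend adopting that formulation, as it eliminates the need to propagate positivity through the boundary strip altogether; with either fix, your proof is sound.
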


\begin{proof}
We shall prove this lemma by contradiction. Suppose the lemma does
not hold.  By \eqref{limit-cons}, we can easily see that $\Lambda
>(B_1+B_2)/2$ and $w_{\Lambda}>0, \forall (x, y) \in D_{\Lambda}$.
 Then there exists a sequence of numbers $\{\lambda_m\} $
 such that $\lambda_m <
\Lambda $,  and $\lim_{m \to \infty} \lambda_m =\Lambda$ and the
infimum of $ w_{\lambda_m}$ in $D_{\lambda_m}$ is negative.  Using
\eqref{lim-u} and  the translating arguments as above, we can show
that the infimum of $ w_{\lambda_m}$ in $D_{\lambda_m}$ is achieved
at a point $(x_m, y_m)$, i.e.,
\begin{equation}\label{infimum}
w_{\lambda_m} (x_m, y_m)=\inf_{D_{\lambda_m}} w_{\lambda_m}<0.
\end{equation}
Since $w_{\lambda_m}$ satisfies an elliptic equation similar to
\eqref{elliptic-w} with $\xi(x_m, y_m)= s u(x_m, y_m)+(1-s)
u_{\lambda_m}(x_m, y_m)$ for some $s \in (0,1)$,  by the strong
maximum principle we know that $u (x_m, y_m)> -1+\delta$ and hence
   $y_m-k_1(x_m)<R_2$  if $x_m>X_1$.  By \eqref{lim-u}  and the assumption
$\Lambda> (B_1+B_2)/2$,  we know $x_m <X_2$ for some constant $X_2$
independent of $m$. Therefore there exists a subsequence of $\{m \}$
(still denoted by itself) such that $(x_m, y_m)$ converges to $(x_0,
y_0) \in D_{\Lambda}$  and $w_{\lambda_m} $ converges  to
$w_{\Lambda}$ in $C^3_{loc}(D_{\Lambda})$ as well as  in
$C^3(B_1(x_0, y_0)\cap \bar{D_{\Lambda}})$. It is easy to see that
$\nabla w_{\Lambda}(x_0, y_0)=0$.  Furthermore, $w_{\Lambda} $ is an
even function in $x$ and  satisfies an elliptic equation similar to
\eqref{elliptic-w} in $D_{\Lambda}$, by the Harnack inequality we
can see that $(x_0, y_0) $ is not on the $y$-axis. Hence $(x_0,
y_0)$ must be on the portion of boundary $\{ (x, y): y= \Lambda\} $
of $D_{\Lambda}$. Then by the Hopf Lemma, we have
$\frac{\partial}{\partial y} w_{\Lambda}(x_0, y_0)>0$. This is a
contradiction, which proves the lemma.
\end{proof}

We note that  $ u_\Lambda \ge u$ in $D_{\Lambda}$ and $u_{y}(x,
\lambda)=-\frac{1}{2} \frac{\partial }{\partial y} w_{\lambda}(x,
\lambda)<0, \forall x \in \Er$ when $\lambda> \Lambda$. Similarly,
we can use the moving plane method from below, i.e., repeating the
above procedure for $w_{\lambda}:=$ in $D^c_{\lambda}:=\{ (x, y):
x>0, \, y<\lambda\}$, and conclude $ u_\Lambda \ge u$ in
$D^-_{\Lambda}$. Therefore, Theorem 1.1 is proven.

\section{ Even symmetry of entire solutions with four ends}

We shall show that  certain entire solutions of
\eqref{allencahn-general} with {\it four ends} must be evenly
symmetric with respect to both $x$-axis and $y$-axis after a proper
translation and rotation.  First we consider the case that  the {\it
four ends}  are asymptotically straight lines, i.e., on  each $0$-
level set $\Gamma_i$  there holds
\begin{equation}\label{fourends}
 y =\tan(\theta_i) x+ A_i+o(1)  \, \, \text{as } \,\, x \to \infty,  \,\, 1 \le i \le 4
 \end{equation}
 where $ 0 < \theta_i< \theta_{i+1}< 2 \pi, $ and  $  \theta_i \not = \pi/2,  \,\, \theta_i \not =\pi/2
,\,  \, i=1, 2, 3, 4$. Without loss of generality, after a proper
rotation we may also assume that $ 0<\theta_1=2\pi-\theta_4< \pi/2$
and $ \theta_2 \not =\pi, \theta_3 \not = \pi$.

By Proposition \ref{exponential1}, we know that Hamiltonian identity
\eqref{eq:axial-ham} holds.    Moreover,   in view of \eqref{lim-u},
on a  fixed cone $ \{ (r, \theta)=(x, y):   \theta_{i-1}+\delta <
\theta < \theta_{i+1}-\delta\}$ with a sufficiently small $\delta>0$
there holds
\begin{equation}\label{asym-i}
|u(x, y)-g\bigl( x\sin\theta_i- y\cos\theta_i +A_i\cos \theta_i
\bigr)| \to 0,  \quad \text{ uniformly as } \,\,  r \to  \infty
\end{equation}

As in \eqref{limit-rho},  by Hamiltonian identity
\eqref{eq:axial-ham} we can easily obtain that
$$
\rho(x)=  \be (\cos\theta_1 +\cos\theta_4) = \be (-\cos
\theta_2-\cos\theta_3).
$$

Similarly, when $ x$-axis is replaced by $y$-axis in  Hamiltonian
identity \eqref{eq:axial-ham}, we obtain
$$
\be(\sin\theta_1+\sin\theta_2)=\be (-\sin\theta_3-\sin\theta_4).
$$
We can easily derive that
$$\pi-\theta_2= \theta_1=\theta_3-\pi.
$$

Now we follow the moving plane procedure as in the proof of Theorem
1.1. It can be shown that Lemma 3.4 still holds with $D_{\lambda}$
being modified as $\{ (x, y):  y \ge \lambda\}$.   Furthermore,
Lemma 3.5 also holds with
$$
\Lambda =\max\{  (A_1+A_4)/2,  (A_2+A_3)/2 \}.
$$
Without loss of generality, after proper translation in $y$ we may
assume that $ \Lambda =A_1+A_4=0\ge A_2+A_3$.

Next we shall show
\begin{equation}\label{2=3}
A_2+A_3=0.
\end{equation}

For this purpose, let us now state another Hamiltonian identity for
$u$, which was used in \cite{felmer} and \cite{gmx} for solutions of
nonlinear Schrodinger equation before.  A similar identity for
certain  parabolic equations is also used in \cite{polacik1} and may
be regarded as conservation of moment.

Define
\begin{equation}\label{rho-x}
E(x)= \int_{\Er} y [ F \bigl(u(x,y)\bigr) +\frac{1}{2} u_y^2(x,
y)-\frac{1}{2} u_{x}^2 (x, y)]dy.
\end{equation}
Then, by \eqref{exponential1}, $E(x)$ is well defined.  We have
\begin{proposition}
\begin{equation}
E(x) \equiv C, \quad x \in \Er.
\end{equation}
\end{proposition}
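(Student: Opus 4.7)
The plan is to obtain the identity by a standard Noether-type integration by parts: I would express $E'(x)$ as $-M(x)$ where $M(x):=\int_{\Er} u_x u_y\,dy$, verify that $M$ is $x$-independent, and then show $M\equiv 0$ by passing to the limit $x\to+\infty$ and using the asymptotic description of the four ends. Well-definedness of both $E$ and $M$ follows from Proposition \ref{exponential1}: on any horizontal line the integrand is concentrated in a neighborhood of the zero level set (which meets the line in finitely many points), and outside of it the quantities $|u^2-1|+|\nabla u|+|\nabla^2 u|$ decay exponentially in $|y|$, easily absorbing the linear weight $y$ in the definition of $E$.

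The key computation is the pointwise identity
$$
\partial_x\bigl[F(u)+\tfrac{1}{2}u_y^2-\tfrac{1}{2}u_x^2\bigr]=F'(u)u_x+u_y u_{xy}-u_x u_{xx}=u_x u_{yy}+u_y u_{xy}=\partial_y(u_x u_y),
$$
obtained by substituting $u_{xx}=F'(u)-u_{yy}$. Differentiating $E(x)$ under the integral sign and integrating by parts in $y$ then gives
$$
E'(x)=\int_{\Er} y\,\partial_y(u_x u_y)\,dy=\bigl[y\,u_x u_y\bigr]_{y=-\infty}^{y=+\infty}-\int_{\Er}u_x u_y\,dy=-M(x),
$$
the boundary term vanishing since $y\,u_x u_y$ decays exponentially as $|y|\to\infty$. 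A parallel manipulation
$$
M'(x)=\int_{\Er}(u_{xx}u_y+u_x u_{xy})\,dy=\int_{\Er}\partial_y\bigl[F(u)-\tfrac{1}{2}u_y^2+\tfrac{1}{2}u_x^2\bigr]\,dy=0
$$
shows that $M(x)\equiv M_0$ is constant.

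The main obstacle is evaluating $M_0$. I would do this by letting $x\to+\infty$ and invoking the asymptotic formula \eqref{asym-i}: on the right half-plane the solution is governed by the two ends at angles $\theta_1$ and $\theta_4=2\pi-\theta_1$ (the relation between the angles having been established earlier in this section using the Hamiltonian identity \eqref{eq:axial-ham}). Along each of these layers $u\approx g(s_i)$ with $s_i=x\sin\theta_i-y\cos\theta_i+A_i\cos\theta_i$, so $u_x u_y\approx -g'(s_i)^2\sin\theta_i\cos\theta_i$. Changing variables $y\mapsto s_i$ (with Jacobian $1/|\cos\theta_i|$) and using $\int_{\Er}g'^2\,ds=\be$, each layer contributes $-\be\sin\theta_i\,\mathrm{sgn}(\cos\theta_i)$ to $M(x)$; contributions from the complementary region, where $|u|\to1$, are controlled by Proposition \ref{exponential1} and tend to zero. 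Since $\cos\theta_1=\cos\theta_4>0$ while $\sin\theta_4=-\sin\theta_1$, the two contributions cancel, yielding $M_0=0$, and hence $E'(x)\equiv 0$.

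The delicate point in the last step is the uniform control of the error between $u$ and the superposition of the two layer profiles across all of the vertical line $\{x=\text{const}\}$ as $x\to\infty$; this is standard from the exponential estimates in Proposition \ref{exponential1} combined with the asymptotic linearity of the level set established in Section 3, but it is where all the quantitative input is used. Once $M_0=0$ is in hand, the proposition follows immediately.
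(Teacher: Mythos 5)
Your proof is correct and follows exactly the route the paper intends: the paper omits the argument, deferring to \cite{felmer} and \cite{gmx}, where the same Noether/stress-energy computation $E'(x)=-\int_{\Er}u_xu_y\,dy$ with exponentially decaying boundary terms is carried out. You also correctly supply the one step the bare computation does not give for free, namely that the constant $M_0=\int_{\Er}u_xu_y\,dy$ vanishes (deduced here from the four-end asymptotics and the angle relations already established in this section); without that, $E$ would only be affine in $x$ rather than constant.
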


The proof of this Hamiltonian identity is based on
\eqref{exponential1} and is  similar to those in \cite{felmer} and
\cite{gmx}. The details is left to the reader.

Now, using  \eqref{asym-i},   straight forward computations can lead
to
$$
\lim_{x \to \infty} E(x) =(A_1+A_4) \be \cos\theta_1=0
$$
and
$$
\lim_{x \to -\infty} E(x) =(A_2+A_3) \be \cos\theta_1.
$$
Therefore, \eqref{2=3} is proven.

The moving plane method then leads to the even symmetry  and
monotonicity  of $u$ in $y$. Repeatting the above arguments  with
$x$ and $y$ switched,  we can show the even symmetry and
monotonicity of $u$ in $x$.  Therefore, we have shown
\begin{theorem}\label{theorem-fourends}
Assume that $u$ is an entire solution with {\it four ends}
satisfying \eqref{fourends}.  Then, after a proper translation and
rotation, $u$ satisfies \eqref{symmetry} and \eqref{monotone-u}.
\end{theorem}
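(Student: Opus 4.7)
The plan is to use two Hamiltonian-type identities to pin down the asymptotic data of $u$ (both the angles $\theta_i$ and the intercepts $A_i$) and then import the moving plane argument of Section 3 to obtain pointwise symmetry and monotonicity. Since the ends are asymptotically straight lines by \eqref{fourends}, Proposition \ref{exponential1} provides exponential decay of $u^2-1$ and its derivatives transverse to each end, so every integral used below converges. In particular the asymptotic formula \eqref{asym-i} holds on a sector strictly containing each ray, which is the input for all the limit computations.

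First I would recover the angle symmetry $\pi-\theta_2=\theta_1=\theta_3-\pi$. After a rotation, normalize so that $0<\theta_1=2\pi-\theta_4<\pi/2$ with $\theta_2,\theta_3\ne\pi$. Applying the Hamiltonian identity \eqref{eq:axial-ham} along the $x$-axis, and computing $\lim_{|y|\to\infty}\rho$ using \eqref{asym-i} together with the one-dimensional evaluation in \eqref{limit-rho}, yields $\cos\theta_1+\cos\theta_4=-(\cos\theta_2+\cos\theta_3)$. Repeating with the roles of $x$ and $y$ exchanged, which is legitimate by the transverse decay \eqref{exponential2}, gives $\sin\theta_1+\sin\theta_2=-(\sin\theta_3+\sin\theta_4)$. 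Combined with the normalization these two identities force the claimed angle relations, so the four ends are already symmetric across both coordinate axes in direction.

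Next I would align the intercepts using the moment Hamiltonian $E(x)\equiv C$ stated just above. By \eqref{asym-i} the profile along each ray is asymptotically a translate of $g$, so direct substitution gives $\lim_{x\to+\infty}E(x)=\be(A_1+A_4)\cos\theta_1$ and $\lim_{x\to-\infty}E(x)=\be(A_2+A_3)\cos\theta_1$; conservation of $E$ forces $A_1+A_4=A_2+A_3$. Translating in $y$ so that the larger of these sums becomes $0$ (as in the version of Lemma 3.5 referenced in the text) then makes both sums vanish. With this alignment, the moving plane construction of Section 3 transfers with $D_\lambda=\{y\ge\lambda\}$: for $\lambda$ large the analogue of Lemma 3.4 follows from \eqref{lim-u} in a near-axis slab, from the convexity of $F$ near $\pm 1$ in the regions between rays, and from \eqref{asym-i} in tubes around each reflected end; sliding $\lambda$ down, the critical $\Lambda$ must equal $0$ by a translation-compactness argument combined with the Hopf lemma on the segment $\{y=\Lambda\}$. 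This produces $u(x,y)=u(x,-y)$ and $u_y<0$ for $y>0$. Rerunning the whole procedure with $x$ and $y$ exchanged, using the $x$-moment analogue of $E$, yields $u(x,y)=u(-x,y)$ and $u_x>0$ for $x>0$, which is \eqref{symmetry} and \eqref{monotone-u}.

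The main obstacle I expect is the moving plane step. Unlike the half-plane setting of Theorem \ref{axial-symm}, the four-ends configuration has asymptotic data consisting of four disjoint one-dimensional transition layers rather than the trivial profile $+1$, so the decomposition into regions analogous to $D_\lambda^+,D_\lambda^-,D_\lambda^0$ and the corresponding comparison estimates must be adapted to all four rays simultaneously; this requires a quantitative version of \eqref{asym-k} that uniformly controls the two layer tubes associated to the upper ends as $\lambda$ varies. Once that uniform control is in place, the translation-compactness extraction of a minimizer of $w_\Lambda$ and the Hopf lemma contradiction proceed exactly as in Section 3.
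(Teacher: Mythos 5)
Your proposal is correct and follows essentially the same route as the paper: the two Hamiltonian identities along the coordinate axes to force the angle relations $\pi-\theta_2=\theta_1=\theta_3-\pi$, the moment identity $E(x)\equiv C$ to align the intercepts, and the moving plane method of Section 3 with $D_\lambda=\{y\ge\lambda\}$, repeated with $x$ and $y$ exchanged. The only cosmetic difference is ordering: the paper first runs the moving plane to normalize $\Lambda=(A_1+A_4)/2=0\ge (A_2+A_3)/2$ and then uses the moment identity to conclude $A_2+A_3=0$, whereas you use the moment identity up front to equate the two intercept sums before translating; the content is identical.
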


\section{Energy quantization of entire solutions}

In this section we shall show that \eqref{fourends} holds under very
mild conditions on  $u$. Indeed, we shall consider entire solutions
with { \it $ 2k $ ends} in general and show some energy quantization
properties for entire solutions with {\it finite Morse index}.

\begin{lemma}\label{energy-monotone}
Suppose $u$ is an entire solution of \eqref{allencahn-general} with
{\it $2k$  ends}.  Assume
\begin{equation}\label{technical}
 \theta_{i}^+-\theta_i^- <\pi, \, \,
1\le i \le 2k.
 \end{equation}
 Then
\begin{equation}\label{energy-bound}
\cale_R(u) \le C R, \quad \forall R
\end{equation}
for some positive constant $C$.
\end{lemma}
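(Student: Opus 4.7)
I would decompose $B_R$ into the central ball $B_{R_0}$, the $2k$ angular sectors $S_i$ containing the ends $\Gamma_i$, and the ``gap'' regions between consecutive sectors. The energy in $B_{R_0}$ is $O(1)$. In each gap, the distance to $\Gamma$ grows linearly in $r$, so Proposition~\ref{exponential1} gives energy density $O(e^{-\nu r})$ and total gap energy $O(1)$. The main task is to bound the energy inside each sector $S_i \cap B_R$ by $O(R)$, after which summing over the $2k$ ends gives \eqref{energy-bound}.

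\noindent\textbf{Sector estimate via a sliced Hamiltonian.} For each $i$, rotate coordinates so that $S_i \subset \{x > 0\}$; this is possible precisely because $\theta_i^+ - \theta_i^- < \pi$. Set $S_i(x) := \{y : (x, y) \in S_i\}$ and consider
\[
\rho_i(x) := \int_{S_i(x)} \bigl(F(u) + \tfrac{1}{2}u_y^2 - \tfrac{1}{2}u_x^2\bigr)\, dy, \qquad \cale_i(x) := \int_{S_i(x)} \bigl(\tfrac{1}{2}|\nabla u|^2 + F(u)\bigr)\, dy.
\]
As in Lemma~\ref{axial-hamiltonian}, differentiating $\rho_i$ in $x$ produces only boundary fluxes along the two rays $\partial S_i$, which are exponentially small by the gap estimate; hence $\rho_i(x)$ is nearly constant. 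By Proposition~\ref{exponential1}, the integrand of $\rho_i$ is concentrated in a thin tube around $\Gamma_i \cap S_i(x)$ (up to exponential tails), and comparison with the 1D profile $g$ there yields a uniform bound $\rho_i(x) \le C$. Combined with the Modica gradient estimate \eqref{gradient}, which implies $\frac{1}{2}u_x^2 \le F(u)$, this gives $\cale_i(x) \le \rho_i(x) + \int_{S_i(x)} u_x^2\, dy \le C$ uniformly in $x$. Integrating in $x \in [R_0, R]$ bounds the energy in $S_i \cap B_R$ by $CR$.

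\noindent\textbf{Main obstacle.} The crux is the uniform bound $\rho_i(x) \le C$, which requires controlling the number of crossings of $\Gamma_i$ with generic slices $\{x = x_0\}$. Without such control a slice could contain arbitrarily many $1$D transitions, each contributing approximately $\be$. Hypothesis \eqref{technical} is essential here: it places $S_i$ inside a half-plane, which, combined with imbeddedness of $\Gamma_i$ and the asymptotic elliptic regularity of Allen--Cahn, should preclude unbounded oscillation of $\Gamma_i$ across transverse slices. An alternative approach is a Pohozaev-type identity expressing $\int_{B_R} F(u)\, dxdy$ as a boundary integral over $\partial B_R$, together with exponential decay of $|\nabla u|^2 + F(u)$ on $\partial B_R$ away from the $2k$ end-crossings; this route requires the same kind of structural bound on the number of crossings of $\partial B_R$ with $\Gamma$ to close.
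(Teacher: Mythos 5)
Your decomposition (core ball, end-cones, exponentially negligible gaps) and your use of a sliced Hamiltonian identity inside each cone, with the flux through the two edge rays exponentially small because they stay at linearly growing distance from $\Gamma_i$ (this is exactly where \eqref{technical} is used), match the paper's proof. One remark in your favor first: the ``main obstacle'' you flag is not an obstacle on this route. You never need to count crossings of $\Gamma_i$ with a slice or compare $u$ with $g$ along it: since $\rho_i'(x)$ consists only of the exponentially small edge fluxes, $|\rho_i(x)-\rho_i(R_0)|\le Ce^{-\mu R_0}$ for all $x\ge R_0$, and $\rho_i(R_0)$ is the integral of a bounded function over a compact interval, so $|\rho_i(x)|\le C$ is immediate.

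The genuine gap is the step ``$\cale_i(x)\le \rho_i(x)+\int_{S_i(x)}u_x^2\,dy\le C$.'' The identity $\cale_i(x)=\rho_i(x)+\int_{S_i(x)}u_x^2\,dy$ is correct, but the Modica estimate \eqref{gradient} only gives $u_x^2\le 2F(u)$ pointwise, hence $\int u_x^2\,dy\le 2\int F\,dy$, and $\int F\,dy$ is precisely what remains uncontrolled: the deduction is circular. What $\rho_i(x)\le C$ together with \eqref{gradient} actually yields is $\int_{S_i(x)}u_y^2\,dy\le C$ (the \emph{tangential} derivative, since $F+\tfrac12u_y^2-\tfrac12u_x^2\ge u_y^2\ge 0$), while $\int F\,dy$ and $\int u_x^2\,dy$ can each be large with a small difference. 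This is not a hypothetical loss: on a stretch where the transition layer of $\Gamma_i$ runs parallel to the slice, $u$ is close to a one-dimensional profile in the normal variable $x$, the density $F+\tfrac12u_y^2-\tfrac12u_x^2$ nearly vanishes, yet $\int_{S_i(x)}F\,dy$ is of the order of the length of the slice. The paper closes this by a second application of the same sliced Hamiltonian identity with respect to a slightly rotated axis $x'$ (chosen so that \eqref{technical} and the half-plane condition still hold): integrating the two slice bounds gives $\int_{B_R\cap S_i}u_y^2\le CR$ and $\int_{B_R\cap S_i}u_{y'}^2\le CR$, and since $y$ and $y'$ are linearly independent directions, $u_y^2+u_{y'}^2\ge c\,|\nabla u|^2$, whence $\int_{B_R\cap S_i}|\nabla u|^2\le CR$; then $\int_{B_R\cap S_i}\bigl(\tfrac12|\nabla u|^2+F\bigr)=\int\rho_i+\int u_x^2\le CR$. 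Without this second direction your sector estimate does not close, and the Pohozaev alternative you sketch inherits the same difficulty.
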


\begin{proof}

We only need to focus on  conic region  $\calc_1$ and show

\begin{equation*}
\int_{B_R \cap \calc_1} \bigl( \frac{1}{2} |\nabla u|^2 +F(u) \bigr)
dxdy \le C R, \quad \forall R.
\end{equation*}
 Without loss of generality, we may assume
 \begin{equation}\label{cone1}
 0< \theta_1^-< \pi/2, \,\,\,  \pi/2< \theta_1^+<\pi.
 \end{equation}
  Choose  $ 0<\alpha^-<\theta_1^-, \, \theta_1^+< \alpha^+<
 \pi$ and let $\calc_1^+:= \{ (r, \theta):  \alpha^- <
\theta <\alpha^+ \}.$    Define
 $$
 \rho_1(y):= \int_{ y \cot\alpha^+}^{y\cot\alpha^-}[
 F(u)+\frac{1}{2} (u_x^2-u_y^2)] dx.
 $$
 Then, in view of \eqref{exponential1},  it is easy to see that

\begin{equation*}
\begin{split}
| \rho_1'(y)| =& |[ F(u)+\frac{1}{2} (u_x^2-u_y^2) +u_x u_y
]\mid_{x=y\cot
 \alpha^+}^{x=y\cot \alpha^-} |\\
  \le  & C e^{ -\mu_1 y}, \quad \forall y \ge R_0
 \end{split}
 \end{equation*}
 for some positive constants  $C, \mu_1$.
Hence we have
\begin{equation}\label{modify}
|\rho_1(R_1)-\rho_1(R_2) | \le Ce^{-\mu_1 R_1}, \quad \forall R_1
\le R_2
\end{equation}
for some constant $C>0$. In particular, we have
$$ |\rho_1(y)| \le C,\quad  \forall y \ge R_0.
$$
By\eqref{gradient},  we have
$$
F(u)+\frac{1}{2} (u_x^2-u_y^2) \ge  \frac{1}{2} u_x^2.
$$
Hence
\begin{equation}
\int_{B_R\cap \calc_1^+} u_x^2 dxdy  \le C R< \infty
\end{equation}
for some constant $C>0$.

Now we choose another Cartesian coordinates $(x', y')$ so that the
$x'$-axis is a small rotation of $x$-axis and \eqref{technical} and
\eqref{cone1} still hold.   Then we can obtain
\begin{equation*}
\int_{B_R\cap \calc_1^+ } u_{x'} ^2 dxdy=\int_{B_R\cap \calc_1^+}
u_{x'} ^2 dx'dy' \le C< \infty
\end{equation*}
Therefore we obtain
\begin{equation*}
\begin{split}
&\int_{B_R \cap \calc_1^+} \bigl( \frac{1}{2} |\nabla u|^2 +F(u)
\bigr) dxdy \\
& \le \int_{B_R\cap \calc_1^+}  \bigl( F(u)+\frac{1}{2} (u_x^2
-u_y^2) \bigr) dx dy +  C \int_{B_R\cap \calc_1^+}( u_x^2+u_{x'}^2 )
dxdy \\
&\le C R, \quad \forall R>0.
\end{split}
\end{equation*}
Similarly, we can show that this estimate holds for all $i\in [1,
2k]$.

In view of \eqref{exponential1}, it is easy to see  that
$$
\int_{\Er^2 \setminus \cup_{i=1}{2k}\calc_i^+} \bigl( \frac{1}{2}
|\nabla u|^2 +F(u) \bigr) dxdy \le \int_{0}^\infty C r e^{-\mu r}dr
< C
$$
for some constant $C>0$. Hence \eqref{energy-bound} is proven.

\end{proof}

In \cite{mod2},  Modica showed  Proposition \ref{modica2}  which
says that $ \cale_R(u)/R$ is increasing in $R$.  It follows
immediately that $ \lim_{R \to \infty} \cale_R(u)/R $ exists.
Indeed, we can show  the following energy quantization property for
entire solutions with {\it finite Morse index}.

\begin{lemma}\label{limit-lines}
Assume that $u$ is an entire solution of \eqref{allencahn-general}
with {\it finite Morse index} and {\it $2k$  ends}.   Assume also
the technical condition \eqref{technical}.  Then the $0$-level sets
$\Gamma$ of $u$ are asymptotically straight lines, i.e.,
 there exist $\theta_i \in [\theta_i^-, \theta_i^+], \, \, 1 \le i \le 2k$
such that on $\Gamma_i$
\begin{equation}\label{2kends}
 y =\tan(\theta_i) x+ A_i+o(1)  \, \, \, \, \text{as } \,\, x \to \infty,  \,\, 1 \le i \le
 2k
 \end{equation}
 where   $  \theta_i \not = \pi/2,  \theta_i \not = 3\pi/2,
 \, \, \forall i \in [1, 2k]$ after a proper rotation.
Moreover, \eqref{energy-limit} holds.
\end{lemma}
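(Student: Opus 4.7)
The plan is to first establish the limit $L:=\lim_{R\to\infty}\cale_R(u)/R$ and then identify its value by analyzing each end individually. Since \eqref{technical} holds, Lemma \ref{energy-monotone} gives $\cale_R(u)\le CR$, while Proposition \ref{modica2} says $\cale_R(u)/R$ is monotone nondecreasing; together they force $L\in[0,\infty)$. The bulk of the work is then geometric: to show each $\Gamma_i$ is asymptotic to a straight ray, and to read off $L=2k\be$ from the layer profile.

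Fix an end $\Gamma_i$ together with its cone $\calc_i$, and rotate so that $\calc_i$ has bisector along the positive $y$-axis with angular opening strictly less than $\pi$ (this is where \eqref{technical} is essential). Inside $\calc_i$ and for $y>Y_0$, write $\Gamma_i$ as a graph $x=\gamma_i(y)$. Because $u$ has finite Morse index, $u$ is stable outside a large ball (\cite{cabre3},\cite{cabre4}); hence for any sequence $y_m\to\infty$, the translates $u^m(x,y):=u(x+\gamma_i(y_m),y+y_m)$ are stable on larger and larger balls and, along a subsequence, converge in $C^3_{loc}(\Er^2)$ to a stable entire solution $u^*$. The two-dimensional De Giorgi result in \cite{gg} then forces $u^*(x,y)=g(x\cos\theta-y\sin\theta)$ for some $\theta$. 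To promote this subsequential statement to an actual limit for $\gamma_i'$, I would run the same scalar Hamiltonian argument as in the proof of Lemma \ref{limit-gamma-der}: define $h(y)=\int_{x_0}^{\infty} u_x u_y\,dx$ on a cross-section transverse to the cone axis (the exponential decay in Proposition \ref{exponential1} and the cone-opening condition make the integral and its derivative well defined and decaying), compute $h'(y)$ using \eqref{allencahn-general}, and match boundary values via the identified blow-up $g(x\cos\theta-y\sin\theta)$ to get $\lim_{y\to\infty}h(y)=-\be\sin\theta$. This pins down the slope limit $\gamma_i'(y)\to\tan\theta_i$, so Lemma \ref{straightline}, applied in the rotated coordinates, gives $\gamma_i(y)=y\tan\theta_i+A_i+o(1)$, which is \eqref{2kends}. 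A final rotation can be chosen so that no $\theta_i$ equals $\pi/2$ or $3\pi/2$, letting us express each ray as a graph $y=\tan(\theta_i)x+A_i$.

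For the energy limit I would decompose $B_R$ into $2k$ tubular neighborhoods $T_i$ of width $O(\log R)$ around each asymptotic ray, plus a remainder $\Omega_R$. By Proposition \ref{exponential1} the integrand $\tfrac12|\nabla u|^2+F(u)$ decays exponentially in the distance to $\Gamma$, so $\int_{\Omega_R}(\tfrac12|\nabla u|^2+F(u))\,dA=o(R)$. On each tube $T_i$, rotate so the ray is horizontal; the asymptotic statement \eqref{2kends} together with the blow-up identification $u\to g(\cdot)$ shows that in the transverse variable $u$ is $g$ up to $o(1)$ errors in $C^1$, so a Fubini computation gives
\begin{equation*}
\int_{T_i}\Bigl(\tfrac12|\nabla u|^2+F(u)\Bigr)\,dA=\be\cdot R+o(R).
\end{equation*}
Summing over the $2k$ ends yields $\cale_R(u)=2k\be\,R+o(R)$, i.e. \eqref{energy-limit}.

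The main obstacle is the middle step: upgrading the blow-down classification of $u^*$ from a subsequential limit to a genuine limit for the slope $\gamma_i'(y)$. The Hamiltonian identity is the right tool, but it only converges on transverse cross-sections that live inside a cone of opening less than $\pi$, so the technical hypothesis \eqref{technical} is used in an essential way at this point. A secondary but more routine subtlety is ensuring the $o(R)$ errors in the energy decomposition are uniform, which follows from the exponential decay of Proposition \ref{exponential1} applied to the stable portion of $u$ outside $B_{R_0}$.
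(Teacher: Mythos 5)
Your outline transplants the Section~3 argument (blow-up classification via stability and De Giorgi, then a Hamiltonian identity to pin the slope, then Lemma \ref{straightline}) into each cone $\calc_i$, and the pieces that do carry over are fine: the truncated momentum $\tilde h(y)=\int u_xu_y\,dx$ over a cross-section of the cone does have exponentially small derivative by Proposition \ref{exponential1}, hence a limit, and the final tube-decomposition giving $\cale_R(u)=2k\be R+o(R)$ is correct once \eqref{2kends} is known. But there is a genuine gap at the step you gloss over with ``write $\Gamma_i$ as a graph $x=\gamma_i(y)$.'' In Section~3 graphicality of the level set, and the fact that each cross-section meets it exactly once, came for free from the monotonicity hypothesis \eqref{axial-monotone}; here there is no monotonicity, and the definition of $2k$ ends only gives an embedded curve $(r_i(t),\theta_i(t))$ with $r_i\to\infty$ inside the cone. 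Such a curve can a priori double back, so a cross-section $\{y=y_m\}$ may meet $\Gamma_i$ several times; then $\tilde h(y_m)$ is a signed sum of several layer contributions and the identification $\lim_m\tilde h(y_m)=-\be\sin\theta$ fails, which is exactly the input your slope-pinning argument needs. The blow-up classification $u^*=g(x\cos\theta-y\sin\theta)$ is only a unit-scale statement at each point of $\Gamma_i$ and does not by itself prevent the direction from drifting and the curve from turning around over long distances.

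The paper closes this gap with two ingredients you do not have. First, it applies Tonegawa's $\Gamma$-convergence result for stable critical points (\cite{yoshi}) to the rescalings $u(\cdot/\epsilon)$ on annuli, obtaining that the blow-down of $\Gamma$ converges in Hausdorff distance to a union of $N$ rays and that $\lim_R\cale_R(u)/R=N\be$ exactly; this quantization is what rules out extra sheets, since any doubling back would produce at least $N+1$ rays in a suitable blow-down. Second, it runs a continuation argument on intervals $[R_n,M_n]$ where the level set is a graph $y=k(x)$ with $|k'|<\tan\alpha_1$: if the slope first reaches $\tan\alpha_1$ at $M_n$, a Hamiltonian identity tilted to the tangential direction at $(M_n,k(M_n))$ yields $\be\le\be\cos\alpha_1+o(1)$, a contradiction, so the small-slope regime propagates to all $x>R_n$ and $k'(x)\to0$. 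You would need to either reproduce this quantization-plus-continuation mechanism or find a substitute for it; as written, your Hamiltonian argument assumes the single-crossing graph structure it is supposed to establish.
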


\begin{proof}

It is easy to see that $u_\epsilon(x):=u(x/\epsilon )$ is a critical
point of   functional

\begin{equation}\label{energy-e}
\cale_{\epsilon, R} (u)=\int_{B_R\setminus B_{1/(2R)}} \bigl(
\frac{\epsilon}{2} |\nabla u|^2 + \frac{1}{\epsilon} F(u) \bigr)
dxdy.
\end{equation}
Fix  $R=1$, $u_{\epsilon}$ is a {\it stable} critical point of
\eqref{energy-e} with $ \cale_{\epsilon, 1} (u_\epsilon) <C<
\infty$. By a $Gamma-$ convergence result of Tonegawa (Theorem 5 in
\cite{yoshi} ), there exists a sequence $\epsilon_n$
 and a union $L$ of  $N$ non-intersecting lines of $B_{1}\setminus B_{1/2} $
 such that
\begin{equation}\label{limit-e}
 \epsilon_n\cdot \bigl(\Gamma \cap  (B_{R/\epsilon_n} \setminus
B_{1/(2\epsilon_n R)})  \bigr)  \to L  \,\, \text{  in Hausdorff
distance as  } \,\, n \to \infty.
\end{equation}
Now fix $R=2, 3, \cdots$ and repeat the argument above for a
subsequence of $\{\epsilon_n\} $  in  the previous  step, by the
diagonal procedure we can find a subsequence, still denoted by
$\epsilon_n$, such that \eqref{limit-e} holds for all $R=1, 2,
\cdots$.  Therefore $L$ must be the union of  $N$ different rays
starting from the origin, and
\begin{equation}\label{quantization}
\lim_{R \to \infty} \cale_R(u)/R = N \be.
\end{equation}

 Fix a ray in $L$. Without loss of generality,  we may assume it to
be the positive $x$-axis which belongs to $\calc_1$ after some
rotation.  Then, for any fixed small angles $\alpha_2>\alpha_1>0$,
there exists  a sequence of conic regions $ \calc_{R_n, M_n,
\alpha_i}:=\{ (x, y): R_n \le x \le M_n, \, \, |y| \le \tan \alpha_i
\}, \, i=1, 2 $ such that $R_n \to \infty, M_n/R_n \to \infty$ and
$$\calc_{R_n, M_n, \alpha_2}\cap \Gamma \subset \calc_{R_n, M_n,
\alpha_1}.
$$

 On the other hand,  thanks to the stability of $u$ in $\Er^2\subset B_{R_0}$ when $R_0$
is large enough,  by similar arguments to the proof of \eqref{lim-u}
we can show that
$$\calc_{R_n, M_n, \alpha_2}\cap \Gamma =\{ (x, y): y= k(x), \,\,  R_n \le x \le M_n \}
$$
for some $C^2$ function $k(x)$  and
\begin{equation}\label{condition}
\max_{ x \in [R_n, M_n]} |k'(x)|< \tan \alpha_1, \quad \max_{ x \in
[R_n, M_n] } |k''(x)| \to 0, \quad \text{as }  n \to \infty .
\end{equation}
Moreover,
$$
||u(x, y)-g(y-k(x))||_{C^2(\calc_{R_n, M_n, \alpha_2} )} \to 0,\quad
\text{ as } n \to \infty.
$$

 We may also assume that $k'( R_n) \to 0$.
We claim that when $n$ is large enough, $M_n$ can be chosen as any
number $R>R_n$ and \eqref{condition} still holds.   If this is not
true, we can choose $M_n$ such that \eqref{condition} holds  but
$k'(M_n)=\tan\alpha_1.$  We claim that $\bigl(\calc_{R_n, M_n,
\alpha_2} \setminus  \calc_{R_n, 2M_n, (\alpha_1+\alpha_2)/2} \bigr)
\cap B_{M_n} (M_n, k(M_n)) $  is empty. If we assume otherwise,
without loss of generality, we may assume that $M_n$ is the first
such sequence related to a ray in $L$.  Now  we use
$\epsilon_n=1/M_n$ as in \eqref{limit-e}, and obtain  the limit as
$L'$ which  is the union of at least $N+1$ rays.  This is  a
contradiction to \eqref{quantization}.  Hence  the claim is true.
Then, using the modified Hamiltonian identity in $\calc_{R_n,
M_n,\alpha_2}$ as in \eqref{modify} with the $y$-axis being replaced
by the tangential direction  of $k(x)$ at $(M_n, k(M_n)$, we obtain
$$
\be  \le \be \cos \alpha_1+o(1), \quad \text{ as } n \to \infty.
$$
This is a contradiction, and hence proves that $M_n$ can be chosen
as any $R>R_n$ when $n$ large enough.   Therefore
$$\calc_{R_n, \infty, \alpha_2}\cap \Gamma =\{ (x, y): y= k(x), \, \, x > R_n \}
$$
and
$$
|k'(x)| < \tan\alpha_1, \quad x>R_n.
$$
Since $\alpha_1>0$  is arbitrary, we obtain that
$$
\lim_{x \to \infty} k'(x) =0.
$$
Now use Lemma \ref{straightline}, we conclude that $\Gamma \cap
\calc_1$ is asymptotically straight line.   The lemma then follows.

\end{proof}

\begin{remark}
Given that $u$ satisfies the condition in Theorem 1.2. If we assume
further that, after a proper rotation, the level set in $\calc_i$
outside a large ball $B_R$  is a graph of a $C^2$ function $k(x)$,
i.e.,
\begin{equation}\label{graph}
\Gamma \cap \calc_i \cap B^c_{R}=\{ (x, y): y= k(x), \, \, x > R \},
\quad  1\le i \le 2k,
\end{equation}
then the conclusion of  Lemma \ref{limit-lines} can be shown
directly without using the result in \cite{yoshi}.  We just start
the proof from \eqref{condition} with $M_n = \infty$   and exploits
the modified Hamiltonian identity. The details is omitted.
\end{remark}

Theorem 1.2 follows from Lemma \ref{limit-lines} and Theorem
\ref{theorem-fourends} directly. If we replace \eqref{tech-4} in
Theorem 1.2 by \eqref{energy-bound}, the conclusion of Theorem 1.2
still holds.

\noindent {\bf Proof of Theorem 1.3}

If \eqref{energy-infinity} does not hold, by the monotonicity
formula of Modica we know that \eqref{energy-bound} must be true.
Using  the $\Gamma-$ convergence result of Tonegawa as in the proof
of Lemma \ref{limit-lines}, we know that there exists a sequence
$\{R_n\}$ such that $R_n \to \infty$ and
\begin{equation}\label{limit-gamma}
 \frac{1}{R_n} \cdot \bigl(\Gamma \cap  B_{M R_n}   \bigr)  \to L  \,\, \text{  in Hausdorff
distance as  } \,\, n \to \infty
\end{equation}
for any $M>0$, where $L$ is the union of $N$  rays from the origin.
Moreover,  \eqref{quantization} holds.  It follows that  $\Gamma$
must be asymptotically straight lines at infinity, as in the proof
of Lemma \eqref{limit-lines}.    Note that  $\Gamma$ is a  union of
$C^2$ curves except at singular points where $u $ and $\nabla u$
both vanish, and $u$  $u$ behaves like harmonic function  near these
singular points.  Therefore $N$ must be an even positive integer
$2k$.  We denote  the directions of these lines by
 $\nu_i= (\cos \theta_i, \sin \theta_i), 1\le i \le 2k$ with
$0< \theta_i< \theta_{i+1} < 2\pi,  1\le i \le 2k-1 $, after a
proper rotation.  Using Hamiltonian identity similar to
\eqref{ham-const} but with more terms (see also \cite{gui1} ), we
obtain
\begin{equation}\label{ham-2k}
\sum_{i=1}^{2k} \be \sin (\theta_i+\theta)=0
\end{equation}
for  almost all $\theta$. Hence \eqref{balance} holds.  The proof of
Theorem 1.3 is complete.

{\bf Acknowledgement} This  research  is partially supported by
National Science Foundation Grant DMS 0500871.


\begin{thebibliography}{9}


\bibitem{abg} Stanley Alama, Lia Bronsard and Changfeng Gui,
Stationary layered solutions
 in $R^2$ for an Allen-Cahn systems with multiple-well potentials,
  {\it Calculus of Variation and PDE, {\bf 5}, 359-390 (1997) }

\bibitem{deg} E. de Giorgi, Convergence problems for functionals and
operators. {\it Proc. Int. Meeting on Recent Methods in Nonlinear
Analysis, Rome, 1978}, E. de Giorgi et al (eds), Pitagora, Bologna,
1979.

\bibitem{brezis} F. Bethuel, H. Brezis and F. H¡äelein, Ginzburg-Landau Vortices,
Birkh¡§auser, Boston, 1994.

\bibitem{felmer} J. Busca, P. Felmer,
 Qualitative properties of some bounded positive solutions to scalar field equations.
  {\it Calc. Var.} {\bf 13} (2001) pp 181-211

\bibitem{polacik1} J. Busca,  M.-A. Jendoubi and P. Polacik,
Convergence to equilibrium for semilinear parabolic problems in
$\Er^n$. {\it Comm. Partial Differential Equations } {\bf 27 }
(2002), 1793-1814.


\bibitem{cabre3} X. Cabr\'e and J. Terra,  Saddle-shaped
solutions of bistable diffusion equations in all of $\Er^{2m}$,
preprint 2008.

\bibitem{cabre4} X. Cabr\'e and J. Terra,
Qualitative properties of saddle-shaped solutions to bistable
diffusion equations, preprint, 2009.


\bibitem{wei} M. del Pino, M. Kowalczyk, F. Pacard and J. Wei,
Multiple end solutions to the Allen-Cahn equation in $\Er^2$, {\it
J. Functional Anal. } {\bf 258} (2)  (2010), 458-503.

\bibitem{gg} N. Ghoussoub and C. Gui, On a conjecture of de Giorgi
and some related problems, {\it Math. Ann.} Vol. {\bf 311} (1998),
481-491.

\bibitem{guinote} C. Gui,  Lecture Notes on Allen-Cahn type equations, in preparation.
(draft available at www.math.uconn.edu/$\sim$gui)

\bibitem{gui1} C. Gui,  Hamiltonian identity for elliptic partial
differential equations, {\it  Journal of Functional Analysis,} Vol
{\bf 254 } (2008),  no. 4, 904--933.

\bibitem{gui-travel} C. Gui, Symmetry of traveling wave solutions to
the Allen-Cahn equation in $\Er^2$, preprint.

\bibitem{gmx}  C. Gui, A. Malchiodi, H. Xu, Axial of symmetry of stationary solutions to super critical
nonlinear Schrodinger equation, to appear in Proceedings of AMS,
DOI: 10.1090/S0002-9939-2010-10638-X

\bibitem{mod} L. Modica, A gradient bound and a Liouville theorem for non
linear Poisson eqautions, {\it Comm. Pure. Appl. Math.} Vol. { \bf
38} (1985), 679-684.


\bibitem{mod2} L. Modica,  Monotonicity of the energy for entire solutions
of semilinear elliptic equations, {\it Partial Differential
Equations and Calculus of Variations: Essays in Honor of E. De
Giorgi,} Vol. II, F. Colombini et al. eds., Birkhasuer, Boston,
1989.

\bibitem{savin1} D. De Silva, O. Savin, Symmetry of global solutions to fully
nonlinear equations in 2D,  preprint, 2007.


\bibitem{schat} M. Schatzman, On the stability of the saddle solution of
Allen-Cahn¡¯s equation, {\it Proc. Roy. Soc. Edinburgh Sect. A, }
{\bf 125 } (1995), no. 6, 1241¨C1275.

\bibitem{yoshi} Y. Tonegawa, On stable critical points for a singular perturbation problem,
{\it  Comm. Anal. Geom.  } 13  (2005),  no. 2, 439--459.

\end{thebibliography}
\end{document}